\documentclass{article}%
\usepackage{amssymb}
\usepackage{amsfonts}
\usepackage{amsmath}
\usepackage{graphicx}%
\setcounter{MaxMatrixCols}{30}
\providecommand{\U}[1]{\protect\rule{.1in}{.1in}}
\newtheorem{theorem}{Theorem}
\newtheorem{acknowledgement}[theorem]{Acknowledgement}

\newtheorem{corollary}[theorem]{Corollary}

\newtheorem{definition}[theorem]{Definition}
\newtheorem{example}[theorem]{Example}

\newtheorem{lemma}[theorem]{Lemma}

\newtheorem{proposition}[theorem]{Proposition}
\newtheorem{remark}[theorem]{Remark}

\newenvironment{proof}[1][Proof]{\noindent\textbf{#1.} }{\ \rule{0.5em}{0.5em}}
\begin{document}

\title{On boundaries of geodesically complete CAT(0) spaces}
\author{Conrad Plaut\\Department of Mathematics\\The University of Tennessee\\Knoxville TN 37996\\cplaut@utk.edu}
\maketitle

\begin{abstract}
We give concrete, \textquotedblleft infinitesimal\textquotedblright%
\ conditions for a proper geodesically complete CAT(0) space to have
semistable fundamental group at infinity.

\end{abstract}

\section{Introduction}

For a CAT(0) space $X$ there is a notion of boundary $\partial X$; details are
discussed later in this paper, but see also \cite{BH}. If a group $G$ acts
properly and cocompactly by isometries on $X$ then $G$ is called a CAT(0)
group. If one strengthens the CAT(0) assumption to Gromov hyperbolicity it is
well-known that the boundaries of any two such spaces $X$ on which $G$ acts
must have boundaries that are homeomorphic, so there is a topopogically
well-defined notion of the boundary of a hyperbolic group. It was shown by
Swarup (\cite{SW}) in 1996 that connected boundaries of Gromov Hyperbolic
groups must be Peano continuua (see also \cite{BM}, \cite{Bow}). In contrast,
Croke-Kleiner (\cite{CK}) showed in 2000 that the same group may act properly
and cocompactly on homeomorphic CAT(0) spaces with non-homomeomorphic
boundaries, and a definitive statement on the topological structure of
boundaries of one-ended (i.e. with connected boundary) CAT(0) spaces remains
elusive. On the one hand, arbitrary metric compacta can be realized as
boundaries of CAT(0) spaces (attributed to Gromov with a proof sketched in
\cite{GO}, Proposition 2). But if $X$ is a cocompact, proper CAT(0) space then
there are the following known constraints: According to Swenson (\cite{Sw}),
$\partial X$ must be finite dimensional. According to Geoghagen-Ontandeda
(\cite{GO}) if the dimension of $\partial X$ is $d$ then the $d$-dimensional
\v{C}ech cohomology with integer coefficients is non-trivial. In the same
paper, the authors show that cocompact proper CAT(0) spaces must be
\textquotedblleft almost geodesically complete\textquotedblright\ in a sense
attributed to Michael Mihalik that extends the following notion of
\textit{geodesically complete }(also known as the geodesic extension
property): Every geodesic extends to a geodesic defined for all\textit{\ }%
$\mathbb{R}$.

A natural candidate for a definitive general topological statement about
boundaries of proper cocompact CAT(0) spaces is that they are always
\textquotedblleft pointed $1$-moveable\textquotedblright, a concept from
classical shape theory. The reason for this is that Geoghegan-Swenson
(\cite{G}, Theorem 3.1) showed that a one ended proper CAT(0) space has
semistable fundamental group at infinity if and only if the boundary is
pointed $1$-movable, and it is a long-standing open question whether proper,
cocompact CAT(0) spaces all have semistable fundamental group at infinity (or
simply are \textquotedblleft semistable at infinity\textquotedblright). In the
compact metric case, pointed $1$-movable is equivalent to the notion of
\textquotedblleft weakly chained\textquotedblright\ introduced in \cite{PWC}.
The later is very simple to define, but we do not need definitions of any of
these concepts here; rather we use Theorem 1 from \cite{PWC}, stated as
Theorem \ref{CAT} below, which only involves the following new definition from
\cite{PWC}. Let $(x,y)$ be a pair of distinct points in a metric space $X$.
Then $(x,y)$ is called a \textit{(distance) sink} (\cite{PWC}) if $(x,y)$ is a
local minimum of the distance function. That is, $(x,y)$ is not a sink if and
only if there are points $x^{\prime},y^{\prime}$ arbitrarily close to $x,y$
with $d(x^{\prime},y^{\prime})<d(x,y)$. In a metric space $X$, for any
$x_{0}\in X$ and $r>0$, $\Sigma_{x_{0}}(r)$ denotes the metric sphere
$\{y:d(x_{0},y)=r\}$.

\begin{theorem}
\label{CAT}Let $X$ be a proper, geodesically complete CAT(0) space with
connected boundary and $x_{0}\in X$. Suppose there exist some $K>0$ and a
positive real function $\iota$, called the refining increment, such that for
all sufficiently large $t$,

\begin{enumerate}
\item $\underset{s\rightarrow t^{+}}{\lim}\iota(s)>0$ (in particular if
$\iota$ is lower semicontinuous from the right) and

\item if $d(x,y)<\iota(t)$ and $(x,y)$ is a sink in $\Sigma_{x_{0}}(t)$ then
$x,y$ may be joined by a curve in $X_{t}\cap B(x,K)\cap B(y,K)$.
\end{enumerate}

Then $\partial X\ $is weakly chained (hence $\partial X$ is pointed 1-movable
and $X$ is semistable at infinity).
\end{theorem}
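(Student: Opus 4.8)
The plan is to verify that $\partial X$ satisfies the intrinsic, shape-theoretic \emph{weakly chained} condition of \cite{PWC}; once this is done the parenthetical conclusions are immediate, since the excerpt already records that weakly chained is equivalent to pointed $1$-movable for compact metric spaces and that pointed $1$-movability of $\partial X$ is equivalent (Geoghegan-Swenson, \cite{G}) to semistability of $X$ at infinity. So the entire burden is to translate the hypotheses, which live on the metric spheres $\Sigma_{x_0}(t)$ inside $X$, into a statement about the compact connected space $\partial X$ itself.

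The structural bridge I would use is the identification of $\partial X$ (with the cone topology) with the inverse limit $\varprojlim_t \Sigma_{x_0}(t)$, the bonding maps being the inward radial projections $\pi_{t,s}\colon \Sigma_{x_0}(t)\to\Sigma_{x_0}(s)$ for $s<t$, sending $z$ to the point of $[x_0,z]$ at distance $s$ from $x_0$. A coherent family $(y_t)$ in this limit is precisely a geodesic ray issuing from $x_0$, so as sets the limit is $\partial X$; geodesic completeness is exactly what makes every $\pi_{t,s}$, and hence every projection $p_t\colon\partial X\to\Sigma_{x_0}(t)$, \emph{surjective}. The geometric input that makes everything quantitative is the CAT(0) convexity estimate: since $s\mapsto d(\gamma_z(s),\gamma_w(s))$ is convex and vanishes at $s=0$, each $\pi_{t,s}$ is $(s/t)$-Lipschitz. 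Equivalently, a set of diameter at most $2K$ sitting on $\Sigma_{x_0}(t)$ subtends, as seen from $x_0$, an angle of order $K/t$, and therefore has diameter tending to $0$ in any compatible metric on $\partial X$ as $t\to\infty$. This is the mechanism by which a joining curve of \emph{fixed} radius $K$ becomes arbitrarily fine on the boundary.

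With this in place I would run the verification as follows. Given two boundary points forming a sink of $\partial X$ at a small scale, pass to their sphere representatives $p_t(\xi),p_t(\eta)$. Using the contraction estimate together with condition (1), which keeps $\iota$ bounded below near $t$ so that $\iota(t)$ is not swallowed by refinement, I would argue that for all sufficiently large $t$ these representatives form a sink of $\Sigma_{x_0}(t)$ with $d(p_t(\xi),p_t(\eta))<\iota(t)$. Condition (2) then supplies a curve joining them inside $X_t\cap B(p_t(\xi),K)\cap B(p_t(\eta),K)$. Projecting this curve inward onto the closed ball $\bar{B}(x_0,t)$, a $1$-Lipschitz nearest-point projection onto a convex set in a CAT(0) space which fixes $p_t(\xi)$ and $p_t(\eta)$, yields a curve inside $\Sigma_{x_0}(t)\cap B(p_t(\xi),K)\cap B(p_t(\eta),K)$ of diameter at most $2K$ joining the two representatives and staying in the neighborhood of infinity determined by $X_t$. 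By the previous paragraph these joins have boundary-diameter tending to $0$, and assembling them coherently through the inverse system produces exactly the controlled joins of sinks required by the weakly chained criterion of \cite{PWC}, with $\iota$ playing the role of the refining increment.

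The step I expect to be the genuine obstacle is precisely this passage between a sink of $\partial X$ and a sink of $\Sigma_{x_0}(t)$. Being a sink is a local-minimum, hence non-closed, condition, while the projections $p_t$ are neither injective (many rays can pass through a single sphere point, since geodesic completeness gives extensions but not unique ones) nor isometric. Consequently a sink upstairs need not project to a sink downstairs, and a sink on a given sphere need not persist as $t$ grows. Making the translation honest requires using the monotonicity of comparison angles to show that local minima of the boundary distance are detected, for all large $t$, by sinks on $\Sigma_{x_0}(t)$ of distance below $\iota(t)$, and then a Mittag-Leffler-type coherence argument to promote the family of fixed-$K$ sphere joins, one for each large $t$, to a single join in the limit $\partial X$. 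Controlling the non-injectivity of the $p_t$ and organizing the per-sphere curves into one coherent boundary join is, I expect, where the real work lies.
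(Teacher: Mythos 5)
First, a point of comparison: this paper contains no proof of Theorem \ref{CAT} to measure your attempt against. The theorem is imported verbatim as Theorem 1 of \cite{PWC} (the companion preprint on weakly chained spaces), and the present paper only \emph{applies} it, in the proof of Theorem \ref{fin}, by constructing the refining increment $\iota$ and verifying condition (2). So your task was really to reconstruct an argument from a different paper, and the definition of ``weakly chained''---the very property you must verify---is deliberately omitted here. That omission is fatal to the proposal as written: you describe the target as ``the controlled joins of sinks required by the weakly chained criterion of \cite{PWC}'' without ever stating that criterion, so the final step of your argument is a placeholder rather than a verification.

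Beyond that, the structural skeleton you set up (the boundary as the inverse limit of the spheres $\Sigma_{x_0}(t)$ under radial projections, surjectivity of those projections from geodesic completeness and properness, the $(s/t)$-Lipschitz contraction coming from convexity of $s\mapsto d(\gamma_\xi(s),\gamma_\eta(s))$, and the $1$-Lipschitz nearest-point projection of the curve supplied by condition (2) back onto $\Sigma_{x_0}(t)$) is sound and is surely part of any proof. But you candidly identify---and do not close---the two steps where the actual content lives: (i) showing that sink or smallness data on $\partial X$ is detected, for all sufficiently large $t$, by sinks on $\Sigma_{x_0}(t)$ at distance below $\iota(t)$, which is the only place hypothesis (1), the positivity of $\underset{s\rightarrow t^{+}}{\lim}\iota(s)$, can enter, and your sketch never actually deploys it; and (ii) the coherence argument assembling the per-sphere joins of bounded diameter $2K$ into a single join on the boundary. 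A proposal that names its own missing steps is a plan, not a proof; as it stands it cannot be credited as a proof of Theorem \ref{CAT}, and since the paper defers entirely to \cite{PWC} for this result, the gap would have to be filled by reproducing the argument of that reference rather than anything available in the present text.
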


We need the following notations to state our main application--we give more
details later in the paper. The $\pi$-truncated metric of any metric space is
the minimum of $\pi$ and the original metric. For any $x$ in a metric space
$X$, let $S_{x}$ denote the space of directions at $x$ with the angle $\alpha$
as metric, the completion of which is known to be a CAT(1) space when $X$ is
CAT($k$). A \textit{local cone} in a CAT($k$) space $X$ is a closed metric
ball $C=\overline{B(o,\rho_{o})}$ for some $o\in X$ called the \textit{apex,}
such that there is an isometry from $C$ into the $k$-cone $C_{k}(S_{o})$ that
takes $o$ to the apex $0$ of the $k$-cone. The number $\rho_{o}>0$ is called
the \textit{cone radius at }$o$. We say that a geodesic space $X$ is (resp.
\textit{uniformly}) \textit{locally conical if every }$x\in X$ is the apex of
a local cone $\overline{B(x,\rho_{x})}$ (resp. and the cover $\mathcal{C}$ by
the interiors of the local cones has a Lebesgue number $\rho>0$). Such a cover
$\mathcal{C}$ is called a \textit{uniform cone cover} of $X$. It is easy to
check that if $X$ is a locally conical geodesic space then $X$ is uniformly
locally conical if either there is a positive lower bound on the cone radii at
all points or $X$ is cocompact.

\begin{theorem}
\label{fin}If $X$ is a proper, geodesically complete CAT(0) space with
connected boundary and a uniform cone covering $\mathcal{C}$ such that for all
$o\in A_{\mathcal{C}}$, the complement of every $\frac{\pi}{2}$-ball in
$S_{o}$ is connected, then $\partial X$ is weakly chained. This condition on
$S_{o}$ is in particular true when

\begin{enumerate}
\item $S_{o}$ (with the angle metric) is a geodesic space or

\item $S_{o}$ has no cut points and is itself locally conical with all cone
radii at least $\frac{\pi}{2}$.
\end{enumerate}
\end{theorem}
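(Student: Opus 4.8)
The plan is to verify the two hypotheses of Theorem \ref{CAT} for a suitable refining increment $\iota$ and constant $K$, and then to invoke it. Let $\rho>0$ be a Lebesgue number for the uniform cone cover $\mathcal{C}$. I would take $\iota$ to be a positive constant $\le\rho$, so that hypothesis (1) holds trivially, $\lim_{s\to t^{+}}\iota(s)=\rho>0$. The entire content then lies in hypothesis (2): given a sink $(x,y)$ in $\Sigma_{x_0}(t)$ with $d(x,y)<\rho$, I must join $x$ to $y$ by a curve lying in $X_t$ (so at $x_0$-distance $\ge t$) and within $B(x,K)\cap B(y,K)$. Since $\{x,y\}$ has diameter $<\rho$, the Lebesgue property gives an apex $o\in A_{\mathcal{C}}$ with $x,y$ in the interior of the local cone $\overline{B(o,\rho_o)}$. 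I would fix cone coordinates, writing points as $(r,\xi)$ with $r$ the distance to $o$ and $\xi\in S_o$, and let $v,\xi_x,\xi_y\in S_o$ be the directions at $o$ toward $x_0,x,y$ respectively.

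The geometric heart of the argument is to identify the open $\frac{\pi}{2}$-ball $B(v,\frac{\pi}{2})\subseteq S_o$ with the set of \emph{inward} directions at $o$. By the first variation formula the derivative of $d(x_0,\cdot)$ along the ray from $o$ in direction $\xi$ is $-\cos\alpha(v,\xi)$, so $\xi\notin B(v,\frac{\pi}{2})$ exactly when that ray is nondecreasing in $x_0$-distance; combined with convexity of $d(x_0,\cdot)$ along cone rays and the CAT(0) comparison bound $d(x_0,(r,\xi))^{2}\ge d(x_0,o)^{2}+r^{2}-2\,d(x_0,o)\,r\cos\alpha(v,\xi)$, such directions move one outward, into $X_t$. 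The key claim I would prove is that for a sink the directions $\xi_x,\xi_y$ must avoid the open $\frac{\pi}{2}$-ball: if, say, $\alpha(v,\xi_x)<\frac{\pi}{2}$, then $\Sigma_{x_0}(t)$ admits near $x$ a tangential displacement that decreases $d(\cdot,y)$, contradicting that $(x,y)$ is a local minimum. Granting this, the hypothesis that $S_o\setminus B(v,\frac{\pi}{2})$ is connected (equivalently locally path connected, as $S_o$ is a compact CAT(1) space) furnishes a path in $S_o$ from $\xi_x$ to $\xi_y$ avoiding $B(v,\frac{\pi}{2})$. I would lift this path radially---riding along $\Sigma_{x_0}(t)\cap\overline{B(o,\rho_o)}$, or along a fixed-radius arc capped by two radial segments---to a curve joining $x$ to $y$; because every direction used is outward, the comparison inequality keeps the curve in $X_t$, and because it stays inside a single cone one estimates its extent by $d(x,y)$ and the radius used to place it in $B(x,K)\cap B(y,K)$. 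With $(x,y)$ thus bridged, Theorem \ref{CAT} yields that $\partial X$ is weakly chained.

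I expect the main obstacle to be precisely the correspondence between the sink condition and avoidance of the open $\frac{\pi}{2}$-ball, together with the simultaneous control needed to place the bridging curve in $X_t\cap B(x,K)\cap B(y,K)$ for a \emph{uniform} $K$. The difficulty is that metric spheres here are genuinely singular, so first variation gives only one-sided information and the naive ``move tangentially toward $y$'' step must be carried out in the space of directions $S_x$, whose suspension structure over the directions of $S_o$ at $\xi_x$ must be analyzed to locate a distance-decreasing displacement. Matching the radius of the lifted path so that it remains at $x_0$-distance $\ge t$ while keeping its diameter comparable to $d(x,y)$ (hence bounded independently of $t$ and of $o$) is the delicate quantitative point; since $|r_x-r_y|\le d(x,y)$ the two radii are comparable, and a case analysis on whether they are small or bounded below should bound the extent of the curve by a uniform $K$.

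Finally I would verify the two sufficient conditions on $S_o$. For (1), if $S_o$ is geodesic then $S_o\setminus B(v,\frac{\pi}{2})=\{\xi:\alpha(v,\xi)\ge\frac{\pi}{2}\}$ is connected: sliding any point outward along its geodesic from $v$ to the metric sphere $\{\alpha(v,\cdot)=\frac{\pi}{2}\}$ stays in the complement, reducing the claim to connectedness of that $\frac{\pi}{2}$-sphere, which I would get from a CAT(1) comparison argument showing that a convex $\frac{\pi}{2}$-ball cannot separate a geodesic CAT(1) space of diameter $\le\pi$ (on the round model the complement of an open hemisphere is a closed hemisphere, hence connected, and comparison transports this). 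For (2), if $S_o$ has no cut points and is locally conical with all cone radii at least $\frac{\pi}{2}$, then $B(v,\frac{\pi}{2})$ sits inside the local cone at $v$, whose removal leaves a punctured cone---connected---while the no-cut-point hypothesis prevents the deletion from disconnecting the rest of $S_o$; assembling these gives connectivity of the complement. Either condition thus implies the connectivity hypothesis feeding the main argument above.
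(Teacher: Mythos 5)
Your high-level strategy (verify Theorem \ref{CAT} inside a single local cone, using connectivity of the complement of a $\frac{\pi}{2}$-ball in $S_o$ to bridge a sink) is the paper's strategy, but two of the points you yourself flag as ``delicate'' are genuine gaps, and the first one sinks the plan as written. A constant refining increment $\iota\le\rho$ cannot work. The only quantitative control you have on outward directions is the comparison bound $d(x_0,(r,\xi))^{2}\ge d(x_0,o)^{2}+r^{2}$ when $\alpha(v,\xi)\ge\frac{\pi}{2}$, so to push the bridging arc out to $x_0$-distance $t$ you need $r^{2}\ge t^{2}-d(x_0,o)^{2}\approx t\,d(x,y)$; with $d(x,y)$ bounded below and $t\to\infty$ this radius exceeds every fixed cone radius, and no curve in $X_t\cap B(x,K)\cap B(y,K)$ exists (the Euclidean cone over a circle of length $4\pi$ already realizes this failure). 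This is exactly why the paper takes $\iota(t)=2\bigl(t-\sqrt{t^{2}-\rho^{2}/4}\bigr)\sim\rho^{2}/4t$, which shrinks just fast enough that the radius-$\frac{\rho}{2}$ arc clears $\Sigma_{x_0}(t)$. Second, your ``key claim'' ($\xi_x,\xi_y\notin B(v,\frac{\pi}{2})$) is both unproved and too weak. The paper's Proposition \ref{largeprop} establishes the much stronger fact that for a sink the geodesics $\gamma_x,\gamma_y$ from $x_0$ actually pass through the apex $o$, which is the \emph{midpoint} of $[x,y]$; its proof is the technical core (first variation at $x$ and $y$ to force $\angle(\gamma^{x},\gamma_{xy})=\pi$, then Lemma \ref{DNB} on non-bifurcation of radial geodesics to locate $o$). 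You need the full strength $\alpha(v,\xi_x)=\pi$, not merely $\ge\frac{\pi}{2}$: only then are the outward radial segments through $x$ and $y$ continuations of the geodesics from $x_0$, so that distance to $x_0$ grows at unit speed along the ``caps'' and the projection of the lifted arc lands on $x$ and $y$. With only $\alpha(v,\xi_x)\ge\frac{\pi}{2}$ the comparison bound along the cap is $\sqrt{d(x_0,o)^{2}+s^{2}}<t$ near $s=d(o,x)$, and the cap need not stay in $X_t$.

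Your verification of sufficient condition (1) is also broken. The complement of $B(v,\frac{\pi}{2})$ does not retract onto the sphere $\Sigma_v(\frac{\pi}{2})$: for $S_o$ a circle of length $2\pi$ the complement is a connected arc while that sphere is two points, so the claimed retraction cannot exist (it fails at points with $\alpha(v,\cdot)=\pi$, where geodesics to $v$ are not unique), and connectedness of the $\frac{\pi}{2}$-sphere is the wrong target anyway. Moreover you never invoke geodesic completeness of $S_o$, without which (1) is false (an arc of length $\pi$ is geodesic CAT(1) but the complement of the $\frac{\pi}{2}$-ball about its midpoint is two points). The paper's argument uses Lemma \ref{gclem} to extend $\gamma_{ca}$ and $\gamma_{cb}$ to length $\pi$, reaching antipodes $a',b'$ of $c$, and joins $a'$ to $b'$ by a geodesic, which stays outside $B(c,\frac{\pi}{2})$ by the triangle inequality. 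Your sketch for condition (2) is essentially the paper's (delete $c$, use the no-cut-point hypothesis, then push the path out of $B(c,\frac{\pi}{2})$ by the radial retraction of Remark \ref{radial}), though it should be phrased as a homotopy of an existing path rather than a connectivity count.
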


With a minor caveat, the geodesic completeness of $X$ is equivalent to $S_{o}$
being geodesically complete for all $o$ (Lemma \ref{gclem}). In other words,
once $X$ is known to be proper, uniformly locally conical and having connected
boundary, the hypotheses of Theorem \ref{fin} reduce entirely to
\textquotedblleft infinitesimal\textquotedblright\ questions about the space
of directions at each apex.

If $K$ is an $M_{k}$-polyhedral complex, we always assume it has a specific
geometric type or \textquotedblleft shape\textquotedblright\ assigned to each
cell, and the set Shapes($K$) of these shapes is finite. Bridson showed in his
thesis (exposition in \cite{BH}) that when Shapes($K$) is finite, $K$ has a
natural geodesic metric induced by this choice of shapes, and we will always
take this metric on $K$. The space of directions at any point is a spherical
($k=1$) polyhedral complex called the link $Lk(x,K)$. In fact, the angle
metric on $Lk(x,K)$ is precisely the $\pi$-truncated metric of the induced
geodesic metric when $Lk(x,K)$ is considered as a spherical polyhedral
complex. Each vertex $v$ of $K$ is the apex of a local cone $\overline
{B(v,\rho)}$, where $\rho$ is at least the infimum of distances to any face
not containing $v$ of a cell that does contain $v$. Every $x\in K$ is
contained in a ball $B(x,\varepsilon)$ that is isometric to a ball of the same
radius in some $\overline{B(v,\rho)}$ with $v$ a vertex such that
$\overline{B(x,\varepsilon)}$ is a local cone, and this $\varepsilon$ has a
positive uniform lower bound (Theorem I.7.39, Lemma I.7.54, \cite{BH}). That
is, $K$ is uniformly locally conical. Now if $x\in B(v,\rho)$ where $v\neq x$,
then $Lk(x,K)$ is isometric to the spherical suspension of $Lk(u,Lk(v,K))$,
where $u$ is the direction of the geodesic from $v$ to $x$ (Lemma
\ref{vectorlem}). Now suppose that $Lk(v,K)$ has at least two points and no
free faces. By Lemma \ref{gclem}, $Lk(v,K)$ is geodesically complete, and
since it is locally conical, $Lk(u,Lk(v,K))$ is also geodesically complete by
Lemma \ref{comp}. Moreover, by Lemma \ref{comp} and Lemma \ref{vectorlem}, the
conditions of Theorem \ref{fin} are always satisfied for any non-vertex $x$ if
they are satsified for every vertex. Putting all of this together we obtain:

\begin{theorem}
\label{Main}Let $K$ be a CAT(0) Euclidean polyhedral complex with Shapes($K$)
finite and connected boundary. If for each vertex $v$ in $K$, $Lk(v,K)$ has at
least two points and no free faces, and the complements of all $\frac{\pi}{2}%
$-balls in $Lk(v,K)$ are connected, then $\partial K$ is weakly chained. In
particular this is true when $Lk(v,K)$ satisfies condition (1) or (2) in
Theorem \ref{fin} at each vertex $v$.
\end{theorem}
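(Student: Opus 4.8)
The plan is to obtain Theorem \ref{Main} as a direct application of Theorem \ref{fin}, the work consisting entirely in verifying that a CAT(0) Euclidean polyhedral complex $K$ with finite Shapes$(K)$ meets all of that theorem's hypotheses. First I would dispose of the structural prerequisites. Connected boundary is assumed; completeness of $K$ comes from Shapes$(K)$ being finite (Bridson, \cite{BH}), and properness then follows from local compactness, which the uniform local cone structure supplies once the compact spherical links are in hand. Uniform local conicality is the cited consequence of Theorem I.7.39 and Lemma I.7.54 of \cite{BH}: every point is the apex of a local cone of uniformly bounded radius, so I may take the uniform cone cover $\mathcal{C}$ to have apex set $A_{\mathcal{C}}=K$. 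For geodesic completeness I would use Lemma \ref{gclem}: at each vertex $v$ the hypotheses that $Lk(v,K)$ has at least two points and no free faces make $Lk(v,K)$ geodesically complete, and the equivalence in Lemma \ref{gclem} lifts this to $K$ once completeness is known at the spaces of directions of all points.

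The real content is the last hypothesis of Theorem \ref{fin}: that for every apex $o\in A_{\mathcal{C}}=K$ the complement of every $\frac{\pi}{2}$-ball in $S_o=Lk(o,K)$ is connected. At a vertex this is exactly the standing assumption, so the task is the reduction to non-vertex apices. Here I would invoke Lemma \ref{vectorlem}: for $x\in B(v,\rho)\setminus\{v\}$ the link $Lk(x,K)$ is isometric to the spherical suspension of $Lk(u,Lk(v,K))$, where $u$ is the initial direction of the geodesic from $v$ to $x$. The verification then splits into two stability statements, both of which I expect to draw from Lemma \ref{comp}: (i) connectedness of $\frac{\pi}{2}$-ball complements descends from $Lk(v,K)$ to the iterated link $Lk(u,Lk(v,K))$ (this is where local conicality is used, exactly as in the passage establishing geodesic completeness of $Lk(u,Lk(v,K))$), and (ii) the property is preserved under spherical suspension.

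The main obstacle, and where I would concentrate the effort, is statement (ii): controlling $\frac{\pi}{2}$-balls in a spherical suspension $\Sigma(Y)$ in terms of balls in $Y$. Writing points of $\Sigma(Y)$ as $(t,y)$ with $t\in[0,\pi]$ and using the spherical law of cosines $\cos d=\cos t_{1}\cos t_{2}+\sin t_{1}\sin t_{2}\cos(d_{Y}(y_{1},y_{2})\wedge\pi)$, I must show that the sublevel set $\{\,q:\cos d(p,q)\le 0\,\}$ is connected for every center $p=(s,z)$. The delicate points are the behaviour near the two suspension poles and the $\pi$-truncation in the angle metric, since whether a given pole lies in the complement depends on the polar coordinate $s$ of $p$. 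I expect the mechanism to be: connect an arbitrary point of the complement along its meridian toward a pole lying in the complement, and move in the $Y$-direction using the hypothesis that $\frac{\pi}{2}$-ball complements in $Y$ are connected; the part genuinely needing care is the pole/truncation bookkeeping that decides which meridians reach the complement.

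Once (i) and (ii) are established, every apex $o\in A_{\mathcal{C}}$ satisfies the $\frac{\pi}{2}$-ball-complement condition, the hypotheses of Theorem \ref{fin} all hold, and that theorem yields directly that $\partial K$ is weakly chained. Finally, the \textquotedblleft in particular\textquotedblright\ clause requires no new argument: Theorem \ref{fin} already records that each of conditions (1) and (2) on $S_o$ forces the $\frac{\pi}{2}$-ball-complement condition, so applying this at every vertex with $S_o=Lk(v,K)$ immediately reduces the two special cases to the main statement.
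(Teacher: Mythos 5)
Your overall strategy---reduce to Theorem \ref{fin}, use Lemma \ref{vectorlem} to identify $S_x$ at a non-vertex point $x$ with the spherical suspension of the iterated link $Lk(u,Lk(v,K))$, and verify the $\frac{\pi}{2}$-ball condition at every apex---is the same as the paper's, but the step you isolate as ``the real content'' is exactly where the proposal has a gap. You rest the argument on two unproved stability claims: (i) that connectedness of $\frac{\pi}{2}$-ball complements descends from $Lk(v,K)$ to $Lk(u,Lk(v,K))$, and (ii) that the property is preserved by spherical suspension of a space having it. Neither is what Corollary \ref{comp} says, and neither is actually available. Claim (i) has no support in the paper: the hypothesis of Theorem \ref{Main} concerns only links at vertices of $K$, not links of links, and no lemma transfers the ball condition downward. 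Claim (ii) is false as an implication from the ball condition alone: if $Y$ has at least two points and diameter less than $\frac{\pi}{2}$, then every $\frac{\pi}{2}$-ball complement in $Y$ is empty (hence connected), yet for $p=(\tfrac{\pi}{2},z)$ the law of cosines gives $\cos d(p,(t,y))=\sin t\,\cos\alpha(z,y)>0$ for all $t\in(0,\pi)$, so the complement of $B(p,\tfrac{\pi}{2})$ in $\Sigma Y$ is exactly the two poles, which is disconnected. The ``pole/truncation bookkeeping'' you defer is therefore not bookkeeping: connecting the poles requires a point of $Y$ at distance at least $\frac{\pi}{2}$ from $z$, which comes from geodesic completeness of $Y$ (extension of geodesics to length $\pi$, Lemma \ref{gclem}), not from the ball condition on $Y$.

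The paper sidesteps both claims. Corollary \ref{comp} asserts that the spherical suspension of \emph{any} geodesically complete CAT(1) space has path-connected complements of $\frac{\pi}{2}$-balls, with no hypothesis on balls in the base. Hence at a non-vertex $x$ one needs only that $Lk(u,Lk(v,K))$ is geodesically complete---which follows from geodesic completeness and local conicality of $Lk(v,K)$ via Lemma \ref{gclem}---and the $\frac{\pi}{2}$-ball condition at $x$ is then automatic; the hypothesis on $\frac{\pi}{2}$-balls is invoked only at the vertices themselves. To repair your write-up, delete (i) and (ii) and cite Corollary \ref{comp} directly. (A smaller remark: your derivation of properness from ``compact spherical links'' is not justified for complexes with finitely many shapes in general, since a vertex may lie in infinitely many cells; the paper's own discussion leaves this point implicit as well.)
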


\begin{example}
The well-known torus complexes of Croke-Kleiner (\cite{CK}) are locally
isometric to two or four Euclidean half-spaces glued along a line, so the link
at every vertex consists geometrically of semicircles (length $\pi$) attached
\textquotedblleft in parallel\textquotedblright\ to one another at their
endpoints. In particular, the links are geodesic spaces with the angle metric
and have no free faces. Theorem \ref{Main} now provides an easy proof that the
fundamental groups of torus complexes are semistable at infinity. This is a
known result--we do not have a reference but Michael Mihalik explained that
this can be read off from a presentation of the group using some of his
earlier results on the subject. Indeed many of the results of this sort are
based on assumptions about how the groups are presented, for example for
Coxeter and Artin groups (\cite{Mi2}). In contrast, Theorem \ref{Main}
requires no knowledge of a group acting on $K$ and in fact $X$ is not even
required to be cocompact.
\end{example}

We say that a metrized CAT(0) polyhedral complex $K$ with Shapes($K$) finite
satisfies Moussong's condition if for every vertex $v$, every edge in
$Lk(v,K)$ has length at least $\frac{\pi}{2}$. Spherical geometry then implies
that $Lk(v,K)$ is locally conical at every vertex with cone radius at least
$\frac{\pi}{2}$. In this case, Theorem \ref{fin} gives a purely combinatorial
sufficient condition on the link at each vertex:

\begin{corollary}
\label{ss}Let $K$ be a CAT(0) Euclidean polyhedral complex with Shapes($K$)
finite and connected boundary that satisfies Moussong's condition. If the link
at each vertex has at least two points and no free faces or cut points, then
$\partial K$ is weakly chained.
\end{corollary}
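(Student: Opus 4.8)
The plan is to deduce Corollary \ref{ss} directly from Theorem \ref{Main}, the only real work being to check that Moussong's condition, together with the absence of cut points, is exactly what is needed to put clause (2) of Theorem \ref{fin} into force at every vertex link. Recall that for a vertex $v$ of $K$ the space of directions $S_v$ coincides with the link $Lk(v,K)$, so all the relevant hypotheses are statements about these links. Of the data required by Theorem \ref{Main}, three pieces are handed to us outright: each link has at least two points, and the hypothesis ``no free faces or cut points'' supplies simultaneously the ``no free faces'' requirement of Theorem \ref{Main} and the ``no cut points'' half of clause (2) of Theorem \ref{fin}. The single remaining thing to produce is the other half of clause (2): that each $Lk(v,K)$ is locally conical with every cone radius at least $\frac{\pi}{2}$.

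First I would check this at the vertices of the link. A vertex $w$ of the spherical ($k=1$) complex $Lk(v,K)$ is the apex of a local cone whose radius is bounded below by the distances from $w$ to the faces, not containing $w$, of the spherical cells that do contain $w$ (the same mechanism quoted from \cite{BH} for $K$ itself). Moussong's condition forces every edge issuing from $w$ to have length at least $\frac{\pi}{2}$, and a short spherical-geometry calculation then shows that the cone radius at $w$ is at least $\frac{\pi}{2}$; this is precisely the assertion recorded in the paragraph preceding the corollary. I expect this spherical estimate to be the only genuinely non-formal step in the argument.

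To upgrade ``cone radius at least $\frac{\pi}{2}$ at each vertex of the link'' to ``cone radius at least $\frac{\pi}{2}$ at every point of the link,'' I would rerun, one dimension down, the reduction already used in the proof of Theorem \ref{Main}. If $u$ is a non-vertex point of $Lk(v,K)$, then by Lemma \ref{vectorlem} the link of $Lk(v,K)$ at $u$ is a spherical suspension of a lower link, and Lemma \ref{comp} lets the conicality bound propagate from the vertices to $u$, so that $Lk(v,K)$ is in fact locally conical with all cone radii at least $\frac{\pi}{2}$ at every point.

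With this in hand each $S_v=Lk(v,K)$ satisfies clause (2) of Theorem \ref{fin}. Since the links also have at least two points and no free faces, the hypotheses of Theorem \ref{Main} are met through its ``in particular'' clause, and we conclude that $\partial K$ is weakly chained, completing the proof.
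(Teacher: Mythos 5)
Your overall route is the same as the paper's: the paper derives Corollary \ref{ss} in essentially one line from the remark that Moussong's condition and spherical geometry make $Lk(v,K)$ locally conical \emph{at every vertex} with cone radius at least $\frac{\pi}{2}$, and then invokes condition (2) of Theorem \ref{fin} through Theorem \ref{Main}. You have correctly isolated the point the paper passes over silently: condition (2), as it is actually used in the proof of Theorem \ref{fin} (the radial retraction onto $\Sigma_{c}(\frac{\pi}{2})$ is performed at an arbitrary center $c$ of a $\frac{\pi}{2}$-ball, and such centers arise as arbitrary directions $\overline{\gamma_{o}}^{\prime}$), requires cone radius at least $\frac{\pi}{2}$ at \emph{every} point of the link, not only at its vertices.

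However, your proposed upgrade from vertices to all points is a genuine gap. Lemma \ref{comp} says nothing about cone radii --- its conclusions are geodesic completeness of $\Sigma S$ and connectivity of complements of $\frac{\pi}{2}$-balls --- and Lemma \ref{vectorlem} identifies the space of directions at a non-apex point of a cone but cannot produce a cone radius at $u$ exceeding the distance from $u$ to the places where the complex branches. In fact the statement you are trying to prove is false in general: if $u$ is the midpoint of an edge of $Lk(v,K)$ of length exactly $\frac{\pi}{2}$ (which Moussong's condition permits) whose two endpoints are branch vertices, then $S_{u}$ consists of two points at distance $\pi$, the ball $\overline{B(u,\rho)}$ embeds isometrically into $C_{1}(S_{u})$ only for $\rho\leq\frac{\pi}{4}$, and so the cone radius at $u$ is $\frac{\pi}{4}<\frac{\pi}{2}$. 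No argument along the lines you propose can close this; one must either weaken condition (2) of Theorem \ref{fin} so that the $\frac{\pi}{2}$ bound is demanded only where it can be verified (and then re-examine the radial-retraction step at non-vertex centers), or argue the connectivity of complements of $\frac{\pi}{2}$-balls centered at non-vertex points of the link directly. Since the paper itself records the bound only at vertices before asserting the corollary, this is a gap you share with the source rather than one you introduced; but your attempt to bridge it with Lemmas \ref{vectorlem} and \ref{comp} does not succeed, and as written your proof is incomplete at exactly the step you flagged as the remaining work.
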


\begin{example}
Moussong showed (\cite{Mou}) that the Davis complex (\cite{Da88}) of any
Coxeter group has a non-positively curved metric statisfying Moussong's
condition. See also \cite{DCG} for background on Coxeter groups. Mihalik
showed in 1996 (\cite{Mi2}) that all Coxeter groups are semistable at
infinity, but Corollary \ref{ss} gives an alternative proof when the link has
no free faces or cut points. However, since any polyhedron may be the link of
the Davis complex of a Coxeter group (see Lemma 7.2.2, \cite{DCG}), Corollary
\ref{ss} does not apply in this way to all Coxeter groups.
\end{example}

\begin{example}
[Geoghagen]Take a unit square. Attach 16 unit squares around the boundary
wrapping around twice (topologically attaching a Moebius band via its median
circle to the boundary of the square). Add 32 unit squares wrapping twice
around the new boundary. Continue this process to create an infinite square
complex. Up to isometry there are only two kinds of vertices:
\textquotedblleft corner vertices\textquotedblright, which lie in seven
squares and \textquotedblleft side vertices\textquotedblright, which lie in
six squares. Therefore this complex is uniformly locally conical. The link at
any side vertex is isometric to three semicircles glued at their endpoints
with the induced geodesic metric, which has no free faces and the complements
of $\frac{\pi}{2}$-balls are connected. The link at the corner vertices is
topologically the same, but geometrically consists of two segments of length
$\frac{3\pi}{2}$ and one segment of length $\frac{\pi}{2}$. If $u$ is the
point at the center of the latter segment then the complement of
$B(u,\frac{\pi}{2})$ has two components. Note also that the induced geodesic
metric and the angle metric on the link do not coincide at corner vertices;
the angle metric is not geodesic. Moreover, Moussong's condition is not
satisfied, although none of the links has a cut point. The boundary of this
space is the 2-adic solenoid, which is known to topologists to not be pointed
1-connected, and was alternatively shown in \cite{PWC} to not be weakly
chained. This shows that the condition about $\frac{\pi}{2}$-balls in Theorem
\ref{fin} cannot simply be removed. Also note that the failure of Theorem
\ref{fin} in this case is strictly for geometric reasons, not combinatorial or
topological ones. Finally, note that although there are only finitely many
isometry types of local cones in this example, it is not cocompact. For
example, only the first square has four \textquotedblleft corner
vertices\textquotedblright.
\end{example}

\section{Cones and suspensions}

We recall a couple of special cases of Berestovskii's consruction of metric
cones and suspensions and establish some basic results for which we have no
references. Let $S$ be a metric space with distance $\rho$ and $\pi$-truncated
metric denoted by $\alpha$. The Euclidean cone $C_{0}(S)$ consists of
$[0,\infty)\times S$ with all points of the form $(0,v)$ identified to a
single point called the apex. We will denote the equivalence class of the
point $(t,v)$ by $tv$, and the apex will be denoted by $0$ or $0v$ depending
on the situation. Note that with this notation $S$ is naturally identified
with the set of all all $1v$ in $X$, which we will denote by $1S$. However
this identification is not generally an isometry and therefore we will
distinguish notationally between elements $1v$ in $X$ and $v$ in $S$. If
$u(s)$ is a curve in $S$ then the curve $tu(s)$ in $X$ will be denoted simply
by $tu$. $X$ is metrized analogously to how $\mathbb{R}^{2}$ is metrized as
the cone of the unit circle with angle as metric. That is, for $s,t>0$ and
$v,w\in S$, $d(tv,sw)^{2}=s^{2}+t^{2}-2st\cos\alpha(v,w)$ (cones $C_{k}X$ for
other curvatures $k$ use the corresponding cosine laws).

Let $M_{k}^{2}$ denote the 2-dimensional space form of constant curvature $k$.
We are primarily interested in $k=0,1$, so $M_{0}^{2}$ is the plane and
$M_{1}^{2}$ is the sphere of curvaure $1$. The former has diameter $\infty$
and the latter has (intrinsic!) diameter $\pi$. Recall that a CAT($k$) space
$X$ is a metric space such that if $d(x,y)$ is less than the diameter of
$M_{k}^{2}$ then $x,y$ are joined by a geodesic and if a geodesic triangle has
a comparison triangle in $M_{k}^{2}$ (no restriction for $k\leq0$) then
Alexandrov's comparisons for curvature $\leq k$ hold (see \cite{BH} for more
details). Berestovskii (\cite{Ber83}) showed in 1983 that $S$ is a CAT(1)
space if and only if $C_{0}S$ is a CAT(0) space (and more strongly the same is
true for $k$-cones for any $k\in\mathbb{R}$). We assume now that $S$ is a
CAT(1) space and review a few facts about geodesics in $C_{0}S$. Suppose that
$u:[0,K]\rightarrow S$ is an arclength parameterized geodesic from $v$ to $w$
in $S$ of length $K<\pi$. For simplicity we consider the constant map to be a
geodesic from $v$ to $v$. By definition of the metric, the function
$f_{\alpha}(s,t)=su(t)$ is an isometry from a Euclidean sector $E(v,w)$ of
angle $K$ in the plane parameterized with polar coordinates, to the set
$Z(v,w)=\{su(t):0\leq s\leq\infty,0\leq t\leq K\}$. Therefore the curves in
$Z(v,w)$ corresponding to line segments in the Euclidean sector are geodesics
in $Z$.

A geodesic in the case $v=w$ is called a \textit{radial geodesic}, i.e of the
form $\gamma_{w}(t)=tw$, $0\leq t<\infty$. Now suppose that $\gamma$ is a
geodesic from $tv$ to $sw$ that does not meet the apex. Since geodesics are
unique, the concatenation of the radial geodesics between the apex and $tv$
and $sw$ cannot be a geodesic and we conclude that $\alpha(v,w)<\pi$. This
means that $\gamma$ corresponds to a line in $E(v,w)$.

When $\alpha(v,w)=\pi$, for any $0<r_{1},r_{2}$, by definition $d(r_{1}%
v,r_{2}w)=\left\vert r_{1}-r_{2}\right\vert $ and therefore the concatenation
of the radial geodesic from $r_{1}v$ to $0$ with the radial geodesic from $0$
to $r_{2}w$ is a geodesic from $r_{1}v$ to $r_{2}w$. All of these geodesics in
$X$ are unique since $X$ is a CAT(0) space.

\begin{example}
\label{cwc}The Euclidean cone of an arbitrary metric space $S$ is
\textquotedblleft sink-free\textquotedblright\ in the sense that it has no
sinks (\cite{PWC}). In fact one can move any pair of points towards the apex,
strictly decreasing the distance between them (and any non-apex point may
similarly move towards the apex). Such cones need not be locally path
connected (e.g. when $S$ is a Cantor set).
\end{example}

The spherical suspension $\Sigma S$ of a metric space $S$ is defined
analogously using the $\pi$-truncated metric, taking the product of the space
with $[0,\pi]$, identifying each $0\times S$ and $\pi\times S$ with points
$\overline{0}$ and $\overline{\pi}$, respectively. The space is metrized using
the spherical cosine law, i.e. as $S^{2}$ as metrized as the suspension of a
circle of length $2\pi$. For $u\in S$ and $\theta\in(0,\pi)$ we denote the
point corresponding to the ordered pair $(u,\theta)$ by $u_{\theta}$.

\begin{lemma}
\label{vectorlem}If $S$ is a CAT(1) space then the spaces of directions at a
point $t_{0}v\in C_{0}S$ is isometric to $S$ when $t_{0}=0$. If $t_{0}>0$ then
$S_{t_{0}v}$ is isometric to the spherical suspension of $S_{v}$, and if
$B(v,\rho)$ is a local cone in $S$ then $B(t_{0}v,\rho)$ is a local cone in
$C_{0}S$.
\end{lemma}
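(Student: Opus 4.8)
The plan is to treat the three assertions in turn, the second being where essentially all of the content lies. For the apex case ($t_0=0$): since $C_0S$ is CAT(0) geodesics are unique, and the only geodesics issuing from the apex are the radial ones $\gamma_w(t)=tw$ (any geodesic from $0$ to $sw$ must be radial, as observed before the lemma). Thus $S_0$ is exactly the set of radial directions and $v\mapsto\dot\gamma_v(0)$ is a bijection onto it. To see it is an isometry I would compute the Alexandrov angle between $\gamma_v$ and $\gamma_w$ at the apex: by the defining cosine law $d(sv,tw)^2=s^2+t^2-2st\cos\alpha(v,w)$, so the Euclidean comparison angle $\bar\angle_0(sv,tw)$ equals $\alpha(v,w)$ for every $s,t>0$, and the Alexandrov angle is the limit of these, hence exactly $\alpha(v,w)$. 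This identifies $S_0$ isometrically with $S$ (with its $\pi$-truncated metric $\alpha$).

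For $t_0>0$ write $p=t_0v$. First I would catalogue the geodesics leaving $p$: the outward radial ray $r\mapsto(t_0+r)v$ and the inward radial segment $r\mapsto(t_0-r)v$ through the apex, which will correspond to the suspension poles $\overline{0},\overline{\pi}$; and, for any $w$ with $\alpha(v,w)<\pi$, the geodesics lying in the flat Euclidean sector $Z(v,w)$, which contains $p$. Using the polar-coordinate isometry $f_\alpha$ of that sector with a Euclidean sector, a geodesic from $p$ in $Z(v,w)$ is a straight segment making a well-defined angle $\phi\in(0,\pi)$ with the outward radial direction, on the side of increasing angular coordinate toward the $S$-direction $\xi=\dot u(0)\in S_v$ of the geodesic $u$ from $v$ toward $w$. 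I would then define $\Phi:\Sigma S_v\to S_p$ by sending $\overline{0},\overline{\pi}$ to the radial directions and $\xi_\phi$ to the direction of that segment, and show $\Phi$ is a surjective isometry. Surjectivity follows because every short geodesic from $p$ ends at a point $q=sw$ with $[p,q]$ lying in a single sector $Z(v,w)$ (as $\alpha(v,w)<\pi$ for $q$ close to $p$), so its direction is radial or of the form $\Phi(\xi_\phi)$.

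The heart of the matter, and the step I expect to be the main obstacle, is that $\Phi$ preserves distances, i.e. that the Alexandrov angle at $p$ between the segments coded by $\xi_\phi$ and $\eta_\psi$ equals the spherical-suspension distance, whose cosine is $\cos\phi\cos\psi+\sin\phi\sin\psi\cos\beta$ with $\beta=\angle_v(\xi,\eta)$. Conceptually this reflects that the tangent cone of $C_0S$ at $p$ splits as $\mathbb{R}\times C_0(S_v)=C_0(\Sigma S_v)$; concretely I would establish it by a direct second-order expansion. Writing the two segments as $\gamma_1,\gamma_2$ and using the sector isometries, a point $\gamma_1(a)$ is the cone point $r_1(a)\,u_1(t_1(a))$ with $r_1(a)=t_0+a\cos\phi+O(a^2)$ and $t_1(a)=(a\sin\phi)/t_0+O(a^2)$, and similarly for $\gamma_2(b)$. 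Feeding these into the cone law $d(\gamma_1(a),\gamma_2(b))^2=r_1^2+r_2^2-2r_1r_2\cos\alpha(u_1(t_1),u_2(t_2))$, and using that $\angle_v(\xi,\eta)=\beta$ forces the comparison angle of $(v,u_1(t_1),u_2(t_2))$ to tend to $\beta$, so that $\alpha(u_1(t_1),u_2(t_2))^2=t_1^2+t_2^2-2t_1t_2\cos\beta+o(t_1^2+t_2^2)$, the second-order terms conspire (the $a^2\sin^2\phi$ and $b^2\sin^2\psi$ contributions cancel against $2r_1r_2$) to yield $d(\gamma_1(a),\gamma_2(b))^2=a^2+b^2-2ab(\cos\phi\cos\psi+\sin\phi\sin\psi\cos\beta)+o(a^2+b^2)$. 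Hence the comparison angle $\bar\angle(\gamma_1(a),p,\gamma_2(b))$ converges to the claimed $\theta$, and since $C_0S$ is CAT(0) the Alexandrov angle equals this limit. The delicate points are the uniformity of the error terms and the justification of the infinitesimal law at $v$ from the definition of $\angle_v$, both underwritten by the CAT(1) hypothesis on $S$.

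Finally, for the local-cone claim I would transport the structure through the same trigonometry. The local-cone isometry of $\overline{B(v,\rho)}$ into $C_1(S_v)$ gives the spherical law $\cos\alpha(w_1,w_2)=\cos\phi_1\cos\phi_2+\sin\phi_1\sin\phi_2\cos\beta$ with $\phi_i=\alpha(v,w_i)$ and $\beta=\angle_v(\xi_1,\xi_2)$. I would define $\Psi:\overline{B(p,\rho)}\to C_0(S_p)$ by $\Psi(q)=d(p,q)\cdot(\text{direction of }[p,q])$, i.e. send $sw$ to $\ell\,\xi_{\phi'}$ where $\ell=d(p,sw)$ and $\phi'$ is the latitude of $[p,sw]$. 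The planar laws of cosines and sines inside each sector $Z(v,w)$ express $\ell,\cos\phi',\sin\phi'$ through $t_0,s,\phi$, and substituting these into the Euclidean cone law on $C_0(S_p)=C_0(\Sigma S_v)$ collapses, after cancellation, exactly onto the cone law $d(s_1w_1,s_2w_2)^2=s_1^2+s_2^2-2s_1s_2\cos\alpha(w_1,w_2)$ of $C_0S$. Thus $\Psi$ is an isometric embedding fixing the apex, proving $\overline{B(p,\rho)}$ is a local cone. The one bookkeeping point I would record is that every $q=sw\in B(p,\rho)$ must have $w$ inside the region where $S$'s local-cone law is available; here the sector estimate $d(sw,p)\ge t_0\sin\alpha(v,w)$ controls $\alpha(v,w)$ and pins down the admissible radius.
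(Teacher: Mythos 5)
Your handling of the first two assertions is correct and is essentially the paper's own argument: you identify $S_{t_{0}v}$ with $\Sigma S_{v}$ via the angle with the outward radial direction and verify the suspension law by a second-order expansion of the cone distance, using $\alpha(u_{1}(t_{1}),u_{2}(t_{2}))^{2}=t_{1}^{2}+t_{2}^{2}-2t_{1}t_{2}\cos\beta+o(t_{1}^{2}+t_{2}^{2})$; the paper organizes the same trigonometry as a vector identity in $\mathbb{R}^{3}$ before passing to the same limit formula for the angle, and the two delicate points you flag are resolved exactly as you indicate (monotonicity of comparison angles in CAT($k$) spaces gives both the existence of the limits and the quality of the error term, and one may take $a=b$).

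The genuine gap is in the local-cone assertion, and it is precisely the item you dismiss as bookkeeping. The sector estimate $d(sw,p)\geq t_{0}\sin\alpha(v,w)$ only yields $\alpha(v,w)\leq\arcsin(\rho/t_{0})$; this is vacuous when $\rho\geq t_{0}$ (then $B(p,\rho)$ contains the apex $0$, near which no cone structure based at $p$ is available), and even when $\rho<t_{0}$ it allows $\alpha(v,w)$ up to $\arcsin(\rho/t_{0})>\rho$, i.e. directions $w$ outside $\overline{B(v,\rho)}$ where the hypothesis gives no control on $S$. With the unadjusted radius the assertion is in fact false: let $S$ be the metric tree consisting of a stem $[v,b]$ of length $1$ with two further short segments attached at $b$. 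This is CAT(1), $\overline{B(v,1)}=[v,b]$ is a local cone with $\rho=1$ (here $S_{v}$ is a single point and $C_{1}(S_{v})$ is an arc of length $\pi$), and $C_{0}(S_{t_{0}v})=C_{0}(\Sigma S_{v})$ is a Euclidean half-plane for every $t_{0}>0$. But for $t_{0}=1.1$ the point $q=(1.1\cos1)\,b$ satisfies $d(q,t_{0}v)=1.1\sin1<1$, so $\overline{B(t_{0}v,1)}$ contains a neighborhood of $q$ in $C_{0}S$; that neighborhood is three half-planes glued along the radial line of $b$ and contains three points pairwise at distance $2\varepsilon$, each at distance $\varepsilon$ from $q$, a configuration that cannot be isometrically placed in a half-plane. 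The correct conclusion is that $B(t_{0}v,\rho^{\prime})$ is a local cone for a reduced radius such as $\rho^{\prime}=\min(t_{0},\,t_{0}\sin\rho,\,\rho)$ (for $\rho\leq\frac{\pi}{2}$), for which your estimate does force $\alpha(v,w)\leq\rho$ and your substitution of the spherical law at $v$ into the planar laws of the sectors goes through; this weaker form is all the paper's applications require. (For what it is worth, the paper's own sketch of this part shares the radius issue, and moreover writes the Euclidean rather than the spherical cone law for a local cone in the CAT(1) space $S$; your version of that computation, including checking the law for arbitrary pairs $s_{1}w_{1},s_{2}w_{2}$ rather than equal parameters, is the more careful one.)
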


\begin{proof}
[Sketch of Proof]The case when $t_{0}=0$ simply follows from the definition of
the cone metric. Suppose $t_{0}>0$ and let $\Gamma_{0}$ denote the segment of
the radial geodesic of $v$ outward from $t_{0}v$. Suppose that $\gamma$ is a
geodesic in $S$ starting at $v$. By definition of the cone metric, every
geodesic in $C_{0}S$ starting at $t_{0}v$ in the same sector as $\gamma$ is
uniquely determined by the angle $\theta$ between it and $\Gamma_{0}$; we will
denote any such geodesic by $\gamma_{\theta}$. This identifies $S_{t_{0}v}$
(as a set) with the spherical suspension of $S_{v}$, with $\overline{0}$
corresponding to the direction of $\Gamma_{0}$.

Suppose that $\gamma_{\theta_{1}}^{1}$ and $\gamma_{\theta_{2}}^{2}$ are
geodesics starting at $t_{0}v$, with directions $v_{1},v_{2}$, respectively at
$t_{0}v$. The cases when $\theta_{i}$ equal $0$ or $\pi$ are trivial; suppose
$0<\theta_{1},\theta_{2}<\pi$. Let $\overline{w_{i}}$ be unit vectors in the
$(x,y)$ plane with $\left\langle \overline{w_{1}},\overline{w_{2}%
}\right\rangle =\cos\angle(\overline{w_{1}},\overline{w_{2}})=\cos
\angle\left(  \gamma^{1},\gamma^{2}\right)  $. Let $\overline{v_{i}}$ be unit
vectors whose orthogonal projections onto the $(x,y)$-plane are parallel to
$\overline{w_{i}}$ and $\alpha(\overline{v_{i}},\overline{w_{i}})=\theta_{i}$.
Writing $\overline{v_{i}}=\overline{w_{i}}+(\overline{v_{i}}-\overline{w_{i}%
})$ and cancelling orthogonal terms we have%
\begin{equation}
\left\langle \overline{v_{1}},\overline{v_{2}}\right\rangle =\left\langle
\overline{w_{1}},\overline{w_{2}}\right\rangle +\left\langle \overline{v_{1}%
}-\overline{w_{1}},\overline{v_{2}}-\overline{w_{2}}\right\rangle
=\left\langle \overline{w_{1}},\overline{w_{2}}\right\rangle \pm\left\Vert
\overline{v_{1}}-\overline{w_{1}}\right\Vert \left\Vert \overline{v_{2}%
}-\overline{w_{2}}\right\Vert \label{vector}%
\end{equation}
with the sign of the last term depending on whether $\theta_{i}$ are on the
same side of $\frac{\pi}{2}$ (in which case it is $+$). This shows that
$\cos\angle\left(  \overline{v_{1}},\overline{v_{2}}\right)  $, which is what
we need to show is equal to $\cos\alpha(\gamma_{\theta_{1}},\gamma_{\theta
_{2}})$ may be calculated using only $\left\langle \overline{w_{1}}%
,\overline{w_{2}}\right\rangle $ and lengths of vectors in the $z$-direction.
Now
\[
\cos\angle(\gamma_{\theta_{1}}^{1},\gamma_{\theta_{2}}^{2}%
)=\underset{t\rightarrow0}{\lim}1-\frac{d(\gamma_{\theta_{1}}^{1}%
(t),\gamma_{\theta_{2}}^{2}(t))^{2}}{2t^{2}}\text{.}%
\]
By definition of the cone metric, the right term may be computed from
$d(\gamma^{1}(t),\gamma^{2}(t))$ using the Euclidean Formula \ref{vector},
and
\[
\underset{t\rightarrow0}{\lim}1-\frac{d(\gamma^{1}(t),\gamma^{2}(t))^{2}%
}{2t^{2}}=\cos\angle(\gamma^{1},\gamma^{2})=\left\langle \overline{w_{1}%
},\overline{w_{2}}\right\rangle \text{.}%
\]
This shows the first part of the lemma. For the second part note that if
$B(v,\rho)$ is a local cone in $S$ then for all $0<t\leq\rho$,
\[
d(\gamma^{1}(t),\gamma^{2}(t))^{2}=2t^{2}(1-\cos\angle(\gamma^{1},\gamma
^{2})\text{.}%
\]
Therefore the above limit is constant and we see that
\[
d(\gamma_{\theta_{1}}^{1}(t),\gamma_{\theta_{2}}^{2}(t))^{2}=2t^{2}%
(1-\cos\angle(\gamma_{\theta_{1}}^{1},\gamma_{\theta_{2}}^{2})\text{,}%
\]
i.e. $B(t_{0}v,\rho)$ is a local cone in $C_{0}S$.
\end{proof}

\begin{lemma}
\label{DNB}(Radial Geodesics Don't Bifurcate) Let $S$ be a CAT(1) space. If
$\gamma$ is a geodesic in $C_{0}S$ that intersects a radial geodesic $\beta$
in more than one point then $\gamma$ is a (possibly infinite) segment of
$\beta$.
\end{lemma}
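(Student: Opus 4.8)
The plan is to reduce the statement to the \emph{non-branching} of geodesics in the CAT(0) space $C_{0}S$, using the description of spaces of directions furnished by Lemma \ref{vectorlem}. Write $\beta=\gamma_{w}$, so $\beta(t)=tw$ for $t\geq0$, and suppose $\gamma$ meets $\beta$ in two distinct points $p=t_{1}w$ and $q=t_{2}w$ with $t_{1}<t_{2}$.

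First I would pin down $\gamma$ between $p$ and $q$. Since $C_{0}S$ is CAT(0) it is uniquely geodesic, so the subsegment of $\gamma$ joining $p$ to $q$ must coincide with the radial subsegment of $\beta$ joining them; hence $\gamma$ agrees with $\beta$ on a nondegenerate arc. (Equivalently, if $\gamma$ avoids the apex it lies in a single Euclidean sector $Z(v,w)$, where it is a straight line meeting the boundary ray $\beta$ twice, which forces it onto that ray.)

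Next I would propagate this coincidence as far as $\gamma$ is defined, in both directions, by a non-branching argument at each non-apex point of $\beta$. Fix $t_{0}>0$; by Lemma \ref{vectorlem} the space of directions $S_{t_{0}w}$ is the spherical suspension $\Sigma(S_{w})$, in which the inward and outward radial directions are exactly the two poles $\overline{0},\overline{\pi}$. In $\Sigma(S_{w})$ every non-polar point $u_{\theta}$ lies at distance $\theta<\pi$ from $\overline{0}$ and $\pi-\theta<\pi$ from $\overline{\pi}$, so each pole has a \emph{unique} antipode, namely the other pole. Because the incoming and outgoing directions of a geodesic at an interior point are at angle $\pi$, the radial incoming direction of $\gamma$ at $t_{0}w$ admits only the opposite radial direction as a continuation; by uniqueness of geodesics in $C_{0}S$ this forces $\gamma$ to continue radially. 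Thus $\gamma$ stays on $\beta$ outward toward infinity and inward all the way to the apex.

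The main obstacle is the apex itself, where Lemma \ref{vectorlem} gives $S_{0}\cong S$ rather than a suspension, so antipodes of $w$ in $S$ need not be unique. Here I would argue by cases. If no point of $S$ lies at distance $\pi$ from $w$, then no concatenation of radial segments through the apex is a geodesic (as recalled just before the lemma), so $\gamma$ cannot be continued past the apex and is therefore a finite or one-sided-infinite segment of $\beta$ terminating at the apex. If some $u$ satisfies $\alpha(w,u)=\pi$, then $\gamma$ may continue through the apex along the radial geodesic $\gamma_{u}$; but $\gamma_{u}\cup\beta$ is itself a single (bi-infinite) radial geodesic line, and $\gamma$ is a segment of it. In either case $\gamma$ never departs from the radial line determined by $\beta$, which is the assertion. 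The only delicate point, and the one to state carefully, is this apex trichotomy, since at every non-apex point the suspension structure makes the radial direction the unique non-branching continuation.
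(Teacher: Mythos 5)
Your first step (uniqueness pins $\gamma$ to $\beta$ between the two intersection points) and your discussion of the apex are fine --- indeed your apex trichotomy is more explicit than the paper's, which essentially only asserts ``$\gamma(t)=tu$ wherever it is defined.'' The genuine gap is in your propagation step. You argue that at a non-apex point $t_0w$ the incoming direction of $\gamma$ is the pole $\overline{0}$ of $\Sigma(S_w)$, that the outgoing direction must therefore be the unique antipode $\overline{\pi}$, and that ``by uniqueness of geodesics in $C_0S$ this forces $\gamma$ to continue radially.'' That last inference is not justified: in a CAT(0) space the angle is only a \emph{pseudometric} on the set of geodesics issuing from a point (which is exactly why $S_p$ is defined as a quotient/completion), so two distinct geodesic branches from $t_0w$ can perfectly well make angle $0$ with each other; knowing that the outgoing branch of $\gamma$ \emph{represents} the direction $\overline{\pi}$ does not tell you that it \emph{is} the radial ray. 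Uniqueness of geodesics between pairs of points is irrelevant here, since the two branches being compared need not share a second endpoint. The statement you need --- a geodesic issuing from $t_0w$ at angle $0$ to the radial ray lies on that ray --- is true in a Euclidean cone, but proving it requires exactly the flat-sector argument, so your route does not actually bypass it; and even granting it, your ``propagate at each point'' phrasing still owes an open-closed argument along $\gamma$.

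The fix is already sitting in your own parenthetical remark in step one, and it is precisely the paper's proof: if $sw$ is any point of $\gamma$ beyond $t_2u$ (or before $t_1u$) with $w\neq u$, the segment of $\gamma$ from $t_1u$ to $sw$ cannot return through the apex, hence lies in the flat sector $Z(u,w)$, where it is a single Euclidean straight line; but that line contains the radial segment from $t_1u$ to $t_2u$, and a Euclidean line containing a radial segment of a sector is the radial ray itself, contradicting $w\neq u$. Applied to the segment from $t_1u$ to an \emph{arbitrary} third point of $\gamma$ rather than just between the two given intersection points, this one observation replaces the entire direction-space propagation and handles all points at once, with the apex crossing treated exactly as in your trichotomy.
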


\begin{proof}
Let $\beta(t)=tu$ for some $u\in S$. By assumption, $\gamma$ contains two
points $t_{1}u,t_{2}u$ with $0\leq t_{1}<t_{2}$. By uniqueness, $\gamma=\beta$
between those two points. Now suppose that another point $sw$ lies on $\gamma$
with $w\neq u$ and $s>t_{2}$. Then the segment of $\gamma$ from $t_{1}u$ to
$sw$ lies in $Z(u,w)$ and the segment of $\gamma$ from $t_{1}u$ to $t_{2}u$
also lies in $Z(u,w)$. But his means that one segment of $\gamma$ in $Z(u,w)$
is radial and another segment is not, which is impossible in Euclidean
geometry. The proof for $s<t_{1}$ is similar, showing that $\gamma(t)=tu$
where ever it is defined.
\end{proof}

\begin{remark}
\label{radial}One known consequence of the above lemma is that there is the
continuous \textquotedblleft radial retraction\textquotedblright\ from any
$B(0,\rho)\backslash\{0\}$ onto the sphere $\Sigma_{0}(\rho)$, which just
takes every $tv$ with $0<t\leq\rho$ to $\rho\cdot v$.
\end{remark}

\begin{definition}
Let $X$ be a metric space. A curve $c$ in $X$ is called a local geodesic if
the restriction of $c$ to any sufficiently small closed interval is a geodesic
(so in a Riemannian manifold this would simply be what is normally referred to
as a \textquotedblleft geodesic\textquotedblright). $X$ is called geodesically
complete if for every non-constant geodesic $\gamma:[a,b]\rightarrow X$ there
is some $\varepsilon>0$ such that $\gamma$ extends to a curve $\gamma
^{e}:[a,b+\varepsilon]\rightarrow X$ such that the restriction of $\gamma^{e}$
to $[b-\varepsilon,b+\varepsilon]$ is a geodesic.
\end{definition}

Note that if $X$ is a complete metric space then $X$ is geodesically complete
if and only if every local geodesic extends to a local geodesic defined on all
of $\mathbb{R}$.

\begin{lemma}
\label{gclem}If $S$ is a complete CAT(1) space with at least two points then
the following are equivalent:

\begin{enumerate}
\item $X=C_{0}(S)$ is geodesically complete.

\item $S$ is geodesically complete.

\item Every non-trivial geodesic in $S$ extends to a geodesic of length at
least $\pi$.
\end{enumerate}
\end{lemma}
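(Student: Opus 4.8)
The plan is to treat geodesic completeness at each point as an antipode condition on directions and to read off the cone's behaviour from Lemma~\ref{vectorlem}. Recall that a local geodesic arriving at a point $p$ with incoming direction $\xi\in S_p$ prolongs past $p$ precisely when $\xi$ admits an \emph{antipode}, i.e. some $\xi'\in S_p$ with $\alpha(\xi,\xi')=\pi$: the straight continuation leaves $p$ in direction $\xi'$, and in a complete CAT space such a continuation is unique and, once available at every stage, can be iterated to extend any geodesic over all of $\mathbb{R}$. Thus I would first record the reformulation that a complete CAT(1) space $Y$ is geodesically complete iff every direction in every $S_p$ has an antipode, using that local geodesics of length $\le\pi$ are minimizing, so that a length-$\pi$ extension reaches a genuine antipodal point at distance $\pi$.

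For the implication (2)$\Rightarrow$(1) I would extend an arbitrary geodesic $\Gamma$ in $X=C_0S$ past an endpoint $P$ in two cases. If $P$ is the apex, then by Lemma~\ref{DNB} $\Gamma$ arrives radially along some $v$, and prolonging a geodesic of $S$ out of $v$ to length $\pi$ (possible by (2)) produces $v'$ with $\alpha(v,v')=\pi$; the concatenated radial geodesics through $0$ then extend $\Gamma$. If $P=t_0v$ with $t_0>0$, then near $P$ the geodesic $\Gamma$ lies in a sector and its radial projection (Remark~\ref{radial}) is a geodesic of $S$ approaching $v$; extending that $S$-geodesic past $v$ by (2) opens an adjacent sector into which $\Gamma$ continues, and Lemma~\ref{vectorlem} guarantees the continuation is straight, because the suspension isometry $S_P\cong\Sigma S_v$ converts the ``angle $=\pi$'' condition in $X$ into the corresponding antipode condition in $S_v$.

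The implication (1)$\Rightarrow$(3) is the most concrete. Given a nontrivial geodesic $u$ of $S$ from $v$ to $w$ of length $\ell=\alpha(v,w)<\pi$, I would form the chord geodesic of $X$ joining $1v$ to $1w$ inside the flat sector $E(v,w)$; its distance to the apex is $\cos(\ell/2)>0$, so by Lemma~\ref{DNB} and planarity within each sector no extension of it can meet the apex. Extending this chord to a complete geodesic of $X$ (using (1)) and projecting radially (Remark~\ref{radial}) yields a curve in $S$ that is a local geodesic in each sector and remains straight across the bounding rays; the flat development shows that the total angular sweep of a complete non-radial cone geodesic is exactly $\pi$, so the projection is a length-$\pi$ local geodesic of $S$ containing $u$. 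Hence $u$ extends to length $\pi$, which is (3).

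The main obstacle is (3)$\Rightarrow$(2): upgrading ``extends to length $\pi$'' to ``extends over $\mathbb{R}$,'' and doing so past the correct endpoint. The strategy is again the antipode reformulation: if some incoming direction $\xi\in S_p$ had no antipode, I would exhibit a short geodesic ending at $p$ with direction $\xi$ that cannot be prolonged past $p$ at all, so it cannot be carried to length $\pi$ on that side, contradicting (3); once every direction has an antipode, the iteration of the previous paragraph gives full extendability. The delicate points here are exactly the ``minor caveat'': one must ensure the relevant short geodesic exists, i.e. that $p$ is not isolated, which for $\ge 2$ points forces a nearby point and hence a direction to reverse antipodally, and one must handle local geodesics of length \emph{exactly} $\pi$ with care, since in degenerate low-dimensional situations (for instance when $S_p$ is a single arc) a point at distance $\pi$ need not supply an antipodal direction. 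Managing these boundary cases, rather than the flat-sector geometry, is where the real work lies.
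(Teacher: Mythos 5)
Your proposal is sound in outline and takes a genuinely different route from the paper. The paper runs the cycle (1)$\Rightarrow$(2)$\Rightarrow$(3)$\Rightarrow$(1): the implications into and out of (1) are read off from the sector correspondence between non-radial geodesics of $C_{0}S$ and geodesics of $S$ (plus the short two-point argument for extending radial geodesics through the apex), and essentially all of the work is concentrated in (2)$\Rightarrow$(3), where a connectedness-in-$t$ argument with the CAT(1) comparison shows that the length-$\pi$ local-geodesic extension satisfies $d(u,\gamma(t))=t$ throughout, i.e.\ is minimizing. You instead run (2)$\Rightarrow$(1)$\Rightarrow$(3)$\Rightarrow$(2), organized around the antipode criterion in $S_{p}$ and the suspension isometry of Lemma \ref{vectorlem}; in particular your derivation of (3) from (1), by extending the chord to a complete line in the cone and observing that its angular sweep is exactly $\pi$, is an attractive alternative to the paper's comparison argument (to finish it, note that each proper subarc of the shadow is a genuine geodesic of $S$, so by continuity and completeness the full shadow is a geodesic of length $\pi$, not merely a local geodesic, which is what (3) requires). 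The one step you have relocated rather than replaced is the assertion that local geodesics of length $\le\pi$ in a complete CAT(1) space are minimizing: that is precisely what the paper's (2)$\Rightarrow$(3) argument proves, and you genuinely need it (e.g.\ in the apex case of your (2)$\Rightarrow$(1), to know the length-$\pi$ extension reaches a point at distance $\pi$, so that the two radial segments concatenate to a geodesic of $C_{0}S$); the fact is true and standard, so citing it is defensible, but without the citation your argument has a hole exactly where the paper does its real work. Two smaller points: the angle-$\pi$ continuation need not be unique (there may be several antipodes), though only existence is used; and your (3)$\Rightarrow$(2) depends on reading (3) as ``extends past a specified endpoint to total length $\pi$'' --- the reading the paper's own proof uses --- since under the weaker reading ``is contained in some geodesic of length $\ge\pi$'' the arc $[0,\pi]$ would satisfy (3) while failing (1) and (2).
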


\begin{proof}
Since there is a 1-1 correspondence between non-radial geodesics in $X$ and
geodesics in $S$, if $X$ is geodesically complete then so is $S$. Suppose $S$
is geodesically complete, $u\in S$ and $\gamma$ is a non-trivial geodesic in
$S$ starting at $u$. Then $\gamma$ extends as a local geodesic of length $\pi$
to a point $v$. We claim that $d(u,v)=\pi$, which means that this extension is
in fact a geodesic, completing the proof of the third part. We assume that
$\gamma:[0,\pi]\rightarrow X$ is parameterized by arclength. Consider the
following statement $S(t):d(u,\gamma(t))=t$. Since $\gamma$ is a local
geodesic, $S(t)$ is true for small positive $t$. By continuity of the distance
function, if $S(s)$ is true for all $s\leq t$ then $S(t)$ is true. Therefore
we need only show that if $S(t)$ is true for some $t<\pi$ then
$S(t+\varepsilon)$ is true for some $\varepsilon>0$. Since $\gamma$ is a local
geodesic, there is some $\varepsilon>0$ such that the restrictions of $\gamma$
to $[t,t+\varepsilon]$ and $[t-\varepsilon,t+\varepsilon]$ are geodesics. The
first of these statements implies that the angle between the reversal of
$\gamma$ starting at $\gamma(t)$ and the restriction of $\gamma$ starting at
$\gamma(t)$ is $\pi$. Since both segments are \textit{bona fide }geodesics we
may apply the CAT(1) condition to conclude that $d(u,\gamma(t+\varepsilon
))=t+\varepsilon$, completing the proof of $2\Rightarrow3$.

If the third part is true then by the 1-1 correspondence mentioned above,
every non-radial geodesic in $X$ extends to a geodesic defined on $\mathbb{R}%
$, and all radial geodesics by definition extend outwards from the apex. The
only remaining question is whether the reversal any radial geodesic
$\gamma_{u}$ extends through the apex. Since $S$ has at least two points there
is some $v\neq u$ in $S$. If $\alpha(u,v)=\pi$ then $\angle(\gamma_{u}%
,\gamma_{v})=\pi$ and $\gamma_{v}$ extends $\overline{\gamma_{v}}$ as a
geodesic beyond the apex. If $\alpha(u,v)<\pi$ then $u$ and $v$ are joined by
a geodesic in $S$, which extends to a geodesic to some $v$ such that
$\alpha(u,v)=\pi$, completing the proof.
\end{proof}

\begin{corollary}
\label{comp}If $S$ is a geodesically complete CAT(1) space then the spherical
suspension $\Sigma S$ of $S$ is geodesically complete and the complement of
every $\frac{\pi}{2}$-ball in $\Sigma S$ is (path) connected.
\end{corollary}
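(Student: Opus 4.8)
The plan is to prove the two assertions separately: geodesic completeness will come from Lemma \ref{gclem} via a cone identity, while path-connectedness of ball-complements will come from an explicit ``meridian'' argument using the suspension cosine law.

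For geodesic completeness the key observation is the isometry $C_{0}(\Sigma S)\cong C_{0}(S)\times\mathbb{R}$. I would set this up by sending the point $r\,(u_{\theta})\in C_{0}(\Sigma S)$ to the pair $\big((r\sin\theta)\,u,\ r\cos\theta\big)\in C_{0}(S)\times\mathbb{R}$, where $(r\sin\theta)u$ denotes the point of radius $r\sin\theta$ in the direction $u$ of the cone. Expanding both squared distances using the conical cosine law on $C_{0}(\Sigma S)$ together with the suspension cosine law $\cos\alpha_{\Sigma S}(u_{\theta},v_{\phi})=\cos\theta\cos\phi+\sin\theta\sin\phi\cos\alpha(u,v)$ shows the two agree term by term, so the map is an isometry. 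Since $S$ is a complete CAT(1) space with at least two points, Lemma \ref{gclem} gives that $C_{0}(S)$ is geodesically complete, and $\mathbb{R}$ trivially is; a product of geodesically complete spaces is geodesically complete (extend each factor geodesic, holding any constant factor fixed, and recombine), so $C_{0}(S)\times\mathbb{R}\cong C_{0}(\Sigma S)$ is geodesically complete. The same identity exhibits $C_{0}(\Sigma S)$ as a product of CAT(0) spaces, hence CAT(0), so by Berestovskii (\cite{Ber83}) $\Sigma S$ is CAT(1); as $\Sigma S$ is complete and contains its two poles, Lemma \ref{gclem} applies in the reverse direction to conclude that $\Sigma S$ is geodesically complete.

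For the complement of a $\frac{\pi}{2}$-ball, write $p=w_{\psi}$ and $C_{p}=\{q:d(p,q)\ge\frac{\pi}{2}\}$. Since $d(\overline{0},p)=\psi$ and $d(\overline{\pi},p)=\pi-\psi$, at least one pole always lies in $C_{p}$; by the pole-swapping symmetry $\theta\mapsto\pi-\theta$ I may assume $\psi\le\frac{\pi}{2}$, so that $\overline{\pi}\in C_{p}$. For an arbitrary $q=u_{\theta}\in C_{p}$ I would connect $q$ to $\overline{\pi}$ along its meridian $s\mapsto u_{s}$, $s\in[\theta,\pi]$. Setting $g(s)=\cos d(p,u_{s})=\cos\psi\cos s+\sin\psi\cos\alpha(w,u)\sin s$, the hypothesis $q\in C_{p}$ reads $g(\theta)\le0$. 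Writing $g(s)=R\cos(s-\phi)$ with $R\cos\phi=\cos\psi\ge0$ forces $\phi\in[-\frac{\pi}{2},\frac{\pi}{2}]$, so that $\{s\in[0,\pi]:g(s)\le0\}$ is a subinterval of the form $[c,\pi]$ whose right endpoint is $\pi$. Since it contains $\theta$ it contains all of $[\theta,\pi]$, so the meridian stays in $C_{p}$. Thus every point of $C_{p}$ is joined to $\overline{\pi}$ inside $C_{p}$, giving path-connectedness.

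I expect the only genuine work to be the cosine-law verification of the cone identity and the sign analysis of $g$, both elementary once the conical and spherical cosine laws are written out; the deﬁnitional inputs (products preserve geodesic completeness, and CAT(1) of $\Sigma S$, which I obtain from the identity rather than quoting it separately) are the pieces most worth checking carefully. I would also verify the degenerate cases where $S$ has fewer than two points, in which $C_{0}(S)$ is a ray or the apex and both conclusions are immediate, and confirm the reading of ``$\frac{\pi}{2}$-ball'' as an open ball, so that $C_{p}$ is closed and contains the bounding pole even when $\psi=\frac{\pi}{2}$. It is worth remarking that the path-connectedness argument uses only the cosine law and not geodesic completeness; completeness is needed solely for the first assertion, and the two are bundled here because both feed the hypotheses of Theorem \ref{fin}.
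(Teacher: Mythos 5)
Your proof is correct, but it takes a genuinely different route from the paper's on both halves. For geodesic completeness the paper argues directly inside $\Sigma S$: by Lemma \ref{gclem} every nontrivial geodesic of $S$ extends to one of length $\pi$, and the suspension of such a geodesic (together with the meridian circles through the poles) supplies all required extensions. Your detour through the isometry $C_{0}(\Sigma S)\cong C_{0}(S)\times\mathbb{R}$ is a clean alternative: it lets you apply Lemma \ref{gclem} twice (once in each direction) and hands you the CAT(1) property of $\Sigma S$ as a byproduct, at the cost of the cosine-law verification and the (true, but worth stating) fact that geodesics in an $\ell^{2}$-product are exactly the simultaneous reparametrized factor geodesics. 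For connectivity the paper's path from $w_{\mu}$ to $\overline{\pi}$ first travels along the suspension of the extended geodesic from $v$ to $w$ to the antipode of $v_{\theta}$ and only then down a meridian, which uses geodesic completeness of $S$ and a case split on $v=w$; your single-meridian argument via the sign analysis of $g(s)=\cos\psi\cos s+\sin\psi\cos\alpha(w,u)\sin s$ is more elementary, avoids the case split, and for generic points needs neither geodesic completeness nor the CAT(1) condition.

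Two caveats. First, when $q$ is the pole $\overline{0}$ (which lies in the complement exactly when $\psi=\frac{\pi}{2}$) there is no canonical meridian, and you must \emph{choose} $u$ with $\alpha(w,u)\geq\frac{\pi}{2}$; such a $u$ exists, but producing it does invoke geodesic completeness of $S$ (extend a geodesic issuing from $w$ to length $\pi$), so your closing remark that completeness plays no role in the connectivity half slightly overstates the case. (The paper's proof leaves the same pole case implicit.) Second, your assertion that the degenerate case of a one-point $S$ is ``immediate'' is backwards: there $\Sigma S$ is an arc of length $\pi$, which is neither geodesically complete nor has connected complements of $\frac{\pi}{2}$-balls. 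So the standing hypotheses of Lemma \ref{gclem} --- $S$ complete with at least two points --- must be read into the corollary as well; they hold automatically in the paper's applications, where $S$ is a link in a geodesically complete complex.
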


\begin{proof}
That $\Sigma S$ is geodesically complete follows from Lemma \ref{gclem} and
the definition of the spherical suspension metric. Let $v_{\theta}\in\Sigma
S$; without loss of generality suppose $\theta\leq\frac{\pi}{2}$. We will show
that if $w_{\mu}\neq\overline{\pi}$ with $d(w_{\mu},v_{\theta})\geq\frac{\pi
}{2}$ then there is a path from $w_{\mu}$ to $\overline{\pi}$ that stays
outside $B(v_{\theta},\frac{\pi}{2})$. If $v=w$ then by definition of the
spherical suspension metric there is a uniquely determined isometric circle
determined by $v=w$ and we may move in either direction from $w_{\mu}$
(depending on whether $\theta>\mu$) to $\overline{\pi}$, staying outside
$B(v_{\theta},\frac{\pi}{2})$. If $w\neq v$ then the geodesic from $v$ to $w$
extends to length $\pi$. Therefore we may move from $w_{\mu}$ away from
$v_{\theta}$ along the corresponding geodesic to the \textquotedblleft
antipodal point\textquotedblright\ of $v_{\theta}$ and proceed as in the first step.
\end{proof}

\section{M$_{k}$-polyhedral complexes}

Recall that a \textit{free face} in an $M_{k}$-polyhedral complex is a face
that lies in exactly one cell of higher dimension. The proof of the next lemma
involves Proposition II.5.10, \cite{BH} (and uses some similar arguments),
which states that an $M_{k}$-polyhedral complex with curvature bounded above
and finite shapes is geodesically complete if and only if it has no free
faces. This statement is not quite correct according to the traditional
definitions because discrete complexes are geodesically complete (there are no
non-trivial local geodesics), and even if one considers the empty set as a
face of dimension $-1$, strictly speaking it is free if and only if the
complex consists of exactly one vertex. And for example discrete spherical
complexes of curvature $\leq1$ occur as the space of directions in
1-dimensional Euclidean complexes. The proof of Proposition II.5.10 is by
induction on dimension starting with $n=0$, which is precisely when the
statement is not true. But this minor issue is easily solved by starting with
$n=1$ and handling the discrete case as a special case.

\begin{lemma}
\label{FF}If $K$ is an $M_{k}$-polyhedral complex with non-positive curvature
and Shapes($K$) finite then $K$ is geodesically complete if and only if the
link at each vertex has at least two vertices and is either discrete or has no
free faces.
\end{lemma}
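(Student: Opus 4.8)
The plan is to prove Lemma \ref{FF}, which characterizes geodesic completeness of a nonpositively-curved $M_k$-polyhedral complex $K$ with finite shapes in terms of local data at each vertex. I would frame the whole argument as a reduction to the cited Proposition II.5.10 of \cite{BH} (geodesic completeness $\Leftrightarrow$ no free faces), applied not to $K$ directly but to the links $Lk(v,K)$, which are themselves spherical ($M_1$) polyhedral complexes of curvature $\le 1$ with finite shapes. The key point I want to exploit is that geodesic completeness is inherently a local and infinitesimal condition: a complete geodesic space fails to be geodesically complete precisely when some local geodesic cannot be extended, and since $K$ is uniformly locally conical (every point has a neighborhood isometric to a ball in a metric cone), the obstruction to extending a local geodesic through a point $x$ is controlled entirely by the space of directions $S_x = Lk(x,K)$.

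First I would reduce to vertices. Using the earlier structural results, a local geodesic can be extended through a point $x$ if and only if for every incoming direction $u \in Lk(x,K)$ there is an oppositely-pointing direction at distance $\pi$ in the angle metric on $Lk(x,K)$; equivalently, by Lemma \ref{gclem} applied to the cone structure at $x$, extendability through $x$ is equivalent to $Lk(x,K)$ being geodesically complete and having at least two points. For non-vertex points this is automatic: by Lemma \ref{vectorlem} the link at a non-vertex $x \in B(v,\rho)$ is a spherical suspension of a lower-dimensional link, and Corollary \ref{comp} shows such suspensions are always geodesically complete once the suspended space is, so no free-face obstruction can first appear at a non-vertex. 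This is exactly the remark made in the paragraph preceding Theorem \ref{Main}, so the only genuine constraints live at the vertices, and it suffices to characterize when each $Lk(v,K)$ is geodesically complete and has at least two points.

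Next I would apply the (corrected) Proposition II.5.10 to the spherical complex $Lk(v,K)$: since it has curvature $\le 1$ and finite shapes, it is geodesically complete if and only if it is discrete or has no free faces. Combining with the "at least two points" condition and the reduction above gives the claimed equivalence. The careful handling of the low-dimensional degenerate case is where I would spend the most attention, and I expect it to be the main obstacle: as the preceding paragraph in the excerpt flags, Proposition II.5.10 is literally false at dimension $0$ (discrete complexes are vacuously geodesically complete, having no nonconstant local geodesics, yet the empty face is "free"), so the induction on dimension must be started at $n=1$ with the discrete case split off by hand. Concretely, the subtlety is that a $1$-dimensional Euclidean complex has discrete spherical links, and these must be recognized as geodesically complete despite the naive free-face count; thus the clean statement of the lemma requires the explicit "discrete or has no free faces" dichotomy rather than the bare "no free faces" of the reference.

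With that degenerate bookkeeping done, the remaining steps are routine: the forward direction follows because if $K$ is geodesically complete then every local geodesic extends, forcing each vertex link to be geodesically complete with at least two points (a link with fewer than two points gives a terminal direction that cannot be extended), and the reverse direction reassembles extendability at every point — vertices by the corrected Proposition II.5.10 and non-vertices by Lemma \ref{vectorlem} and Corollary \ref{comp} — into global geodesic completeness via the characterization that in a complete space geodesic completeness is equivalent to every local geodesic extending to all of $\mathbb{R}$.
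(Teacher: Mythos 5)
Your proposal reaches the right conclusion but takes a genuinely different route from the paper. The paper applies Proposition II.5.10 of \cite{BH} to $K$ itself, converting ``$K$ is not geodesically complete'' directly into ``$K$ has a free face $F$,'' and then argues combinatorially: $F$ contains a vertex $v$, and a pair of edge directions $u_1,u_2$ at $v$ (one into $F$, one into the unique higher cell containing $F$) gives a geodesic in $Lk(v,K)$ that cannot be extended past $u_2$; the converse uses the local cone $B(v,\varepsilon)\cong B(0,\varepsilon)\subset C_0\,Lk(v,K)$ together with Lemma \ref{gclem}. Because a free face of $K$ always contains a vertex, the paper's proof never has to discuss non-vertex points at all. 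You instead apply Proposition II.5.10 to each link $Lk(v,K)$ and reduce geodesic completeness of $K$ to geodesic completeness of all spaces of directions, which forces you to handle non-vertex points via the suspension structure of Lemma \ref{vectorlem} and Corollary \ref{comp}. What your approach buys is a cleaner conceptual picture (everything is local data at spaces of directions, consistent with the paper's remarks before Theorem \ref{Main}); what it costs is exactly the step you gloss over.

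That step is the genuine soft spot. First, ``extendability through $x$ is equivalent to $Lk(x,K)$ being geodesically complete with at least two points'' is not an equivalence as stated: extending geodesics through $x$ only requires every direction at $x$ to admit an antipode at angle $\pi$, which is a priori weaker than geodesic completeness of the link; the correct statement is Lemma \ref{gclem}'s equivalence for the cone $C_0(Lk(x,K))$, whose geodesic completeness also encodes the non-radial geodesics. Second, and more importantly, invoking Corollary \ref{comp} at a non-vertex $x\in B(v,\rho)$ requires knowing that the suspended space $Lk(u,Lk(v,K))$ is itself geodesically complete (or empty); that does not follow from your hypotheses on the vertex links of $K$ alone, but needs a further application of Lemma \ref{gclem} to the locally conical structure of $Lk(v,K)$, iterated through links of links (an induction on dimension), together with the degenerate cases where these iterated links are discrete or a single point. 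This material is available in the paragraph preceding Theorem \ref{Main}, so your argument can be completed, but as written it asserts rather than proves the non-vertex case --- which is precisely the case the paper's proof is structured to avoid.
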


\begin{proof}
Since $K$ has non-positive curvature, the space of directions at each point,
hence the link at each vertex $v$ is a CAT(1) space. Suppose $X$ is not
geodesically complete. Since $X$ has non-positive curvature it is connected
and hence this is equivalent to having a free face $F$. If $F$ is a vertex
then at that vertex the link is a single point. If is $F$ higher dimensional
then let $v$ be any vertex of $F$ and $E_{1}$ be an edge containing $v$, not
contained $F$ but contained in the unique higher dimensional cell containing
$F$. Let $E_{2}$ be an edge in $F$ containing $v$. Then the geodesic in
$Lk(v,K)$ from the directions $u_{1}$, $u_{2}$ corresponding to $E_{1}$,
$E_{2}$, respectively, cannot be extended beyond $u_{2}$. That is, $Lk(v,K)$
is not geodesically complete, and since it is not discrete it has a free face.

Conversely, if $Lk(v,K)$ is a single vertex for some vertex $v$ in $K$ then
there is an edge in that direction. Moreover, there is no edge having angle
$\pi$ with that edge, so the edge cannot be extended as a geodesic past $v$.
That is, $K$ is not geodesically complete. Finally, suppose there is some
vertex $v$ such that $Lk(v,K)$ is not discrete and has a free face. Since
$Lk(v,K)$ is not discrete, it is not geodesically complete. But some
$B(v,\varepsilon)$ is isometric to $B(0,\varepsilon)$ in $C_{0}Lk(v,K)$ and
the proof is finished by Lemma \ref{gclem}.
\end{proof}

\begin{example}
\label{cat}Let $S$ be a complete $\pi$-geodesic space of diameter $\pi$ (e.g.
a CAT(1) space with $\pi$-truncated metric $\alpha$). Then $S$ is a length
space if and only if $S$ is sink-free. Necessity follows from Example 28 in
\cite{PWC}. For the converse, we need only consider the case $d(x,y)=\pi$. If
$(x,y)$ is not a sink, there exist points $x^{\prime},y^{\prime}$ arbitrarily
close to $x,y$ such that $d(x^{\prime},y^{\prime})<\pi$ and since $\alpha$ is
a $\pi$-geodesic metric, $x^{\prime},y^{\prime}$ are joined by a geodesic.
Then a midpoint between $x^{\prime},y^{\prime}$ is an \textquotedblleft almost
midpoint\textquotedblright\ for $x,y$, which is classically known to be
sufficient to show that $X$ is a length space (cf. \cite{Ph}, Proposition 7
for an exposition). In particular, if $S$ is compact then $S$ is geodesic if
and only if it is sink-free.
\end{example}

\begin{proposition}
\label{largeprop}Suppose $X$ is a geodesically complete CAT(0) space,
$x_{0}\in X$, $r>0$, and $(x,y)$ is a sink in $\Sigma_{x_{0}}(r)$. In
addition, suppose that $x,y$ lie in a local cone with apex $o$ and cone radius
$\rho>0$. Then

\begin{enumerate}
\item the geodesics $\gamma_{x},\gamma_{y}$ coincide up to $o$,

\item $d(x,o)=d(y,o)=\frac{d(x,y)}{2}$ and

\item $\rho\geq\frac{d(x,y)}{2}$.
\end{enumerate}
\end{proposition}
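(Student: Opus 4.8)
The plan is to convert the sink hypothesis into a first-order angle condition at $x$ and at $y$, and then read off the geometry from the flat-sector structure of the local cone. Throughout I work in cone coordinates at $o$: using the isometric embedding of $\overline{B(o,\rho)}$ into $C_{0}(S_{o})$, write $x=t_{x}u$, $y=t_{y}w$ with $u,w\in S_{o}$, $t_{x}=d(o,x)$, $t_{y}=d(o,y)$, and $\theta=\angle_{o}(x,y)=\alpha(u,w)\le\pi$, so that $d(x,y)^{2}=t_{x}^{2}+t_{y}^{2}-2t_{x}t_{y}\cos\theta$. Since balls in a CAT(0) space are convex, all of $[x,y]$ lies inside this cone. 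The goal is to show that $[x,y]$ passes through $o$ and that $o$ is its midpoint: then $\gamma_{x},\gamma_{y}$ are exactly the two radial halves $[x,o]$ and $[o,y]$ (statement (1)), $d(x,o)=d(y,o)=d(x,y)/2$ (statement (2)), and (3) is immediate since $d(o,x)=d(x,y)/2$ while $x\in\overline{B(o,\rho)}$ forces $d(x,y)/2\le\rho$.

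First I record that the interior of $[x,y]$ lies strictly inside $B(x_{0},r)$. If some interior point lay on the sphere, convexity of $d(x_{0},\cdot)$ would put the whole segment on it, and the CAT(0) comparison triangle for $x_{0},x,y$ would place $\bar{x}_{0}$ at distance $\ge r$ from every point of the chord $[\bar{x},\bar{y}]$ of the radius-$r$ comparison circle, which is impossible. In particular the midpoint is interior and $d(x_{0},\cdot)$ is strictly decreasing along $[x,y]$ at $x$ and at $y$.

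The heart of the argument, and the step I expect to be the main obstacle to make fully rigorous, is a competitor construction. For small $s>0$ let $x_{s},y_{s}$ be the points of $[x,y]$ at distance $s$ from $x$ and from $y$, so $d(x_{s},y_{s})=d(x,y)-2s$; both lie strictly inside the ball with $d(x_{0},x_{s})=r-\kappa_{x}s+o(s)$, where $\kappa_{x}=\cos\angle_{x}(\overline{xy},\overline{xx_{0}})>0$ (and symmetrically $\kappa_{y}$). Using geodesic completeness, extend the geodesic from $x_{0}$ through $x_{s}$ outward until it meets the sphere at $x_{s}'$; since $d(x_{0},\cdot)$ grows at unit speed along this ray, $d(x_{s},x_{s}')=\kappa_{x}s+o(s)$, and likewise for $y_{s}'$. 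These $x_{s}',y_{s}'$ lie on $\Sigma_{x_{0}}(r)$, tend to $x,y$, and the triangle inequality gives $d(x_{s}',y_{s}')\le(d(x,y)-2s)+(\kappa_{x}+\kappa_{y})s+o(s)$. If $\kappa_{x}+\kappa_{y}<2$ this is $<d(x,y)$ for small $s$, contradicting the sink property; hence $\kappa_{x}=\kappa_{y}=1$, i.e. $\angle_{x}(\overline{xy},\overline{xx_{0}})=\angle_{y}(\overline{yx},\overline{yx_{0}})=0$. Thus $[x,y]$ leaves each endpoint in the direction of $x_{0}$; the delicate points are the first-variation estimate for $d(x_{0},x_{s})$ and the quantitative control of the outward projection $x_{s}'$.

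Finally I feed this back into the cone. Suppose $\theta<\pi$. Then $[x,y]$ is a Euclidean segment in the flat sector $Z(u,w)$ avoiding $o$; since the open sector is locally Euclidean and admits no branching, the geodesic $[x,x_{0}]$, which starts at $x$ in the same direction as $[x,y]$, coincides with the line through $x,y$ until it leaves the sector, hence contains $y$. Then $r=d(x,x_{0})=d(x,y)+d(y,x_{0})=d(x,y)+r$, forcing $d(x,y)=0$, a contradiction. Therefore $\theta=\pi$, so $o\in[x,y]$, giving (1). Now $\overline{xx_{0}}=\overline{xy}$ is the inward radial (pole) direction at $x$, so by Lemma \ref{vectorlem} the geodesic $[x,x_{0}]$ agrees near $x$ with the inward radial geodesic, and by Lemma \ref{DNB} it stays radial as long as it remains in the cone; hence $o\in[x,x_{0}]$ and $d(x,o)=r-d(o,x_{0})$. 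The same argument at $y$ gives $d(y,o)=r-d(o,x_{0})$, so $d(x,o)=d(y,o)=d(x,y)/2$, which is (2), and (3) follows as noted.
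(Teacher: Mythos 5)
Your first half is correct and takes a genuinely different route from the paper. Where the paper rules out $\angle(\gamma^{x},\beta)<\pi$ by producing an intermediate direction $\xi$ and projecting a point of $\xi$ back onto the sphere, you extract the stronger conclusion $\angle_{x}(\overline{xy},\overline{xx_{0}})=\angle_{y}(\overline{yx},\overline{yx_{0}})=0$ from a single first-variation estimate plus the ``extend $[x_{0},x_{s}]$ outward to the sphere'' competitor, which is very much in the spirit of the sink definition. Up to the caveats you flag yourself (the first variation formula in CAT(0), extendability of $[x_{0},x_{s}]$ to a ray --- tools the paper also uses without comment), this part is sound, and arguably cleaner than the paper's.

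The genuine gap is in the second half, and it is the crux of the proposition: twice you pass from an \emph{angle-zero} statement to a \emph{coincidence-of-geodesics} statement. In the case $\theta<\pi$ you assert that $[x,x_{0}]$, ``which starts at $x$ in the same direction as $[x,y]$, coincides with the line through $x,y$ until it leaves the sector''; in the case $\theta=\pi$ you assert that because the direction of $[x,x_{0}]$ is the pole of the suspension, Lemma \ref{vectorlem} makes $[x,x_{0}]$ agree with the inward radial geodesic near $x$. Neither inference is justified. In a CAT(0) space, $\angle_{x}(\gamma,\sigma)=0$ does not imply that $\gamma$ and $\sigma$ share an initial segment: monotonicity only gives that the comparison angles decrease to $0$, i.e. $d(\gamma(t),\sigma(t))=o(t)$, which is compatible with the two geodesics being disjoint for all $t>0$ (this is exactly why the space of directions is a quotient and why germs of geodesics are not determined by their directions). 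Lemma \ref{vectorlem} identifies $S_{t_{0}v}$ with a suspension but says nothing about which geodesic realizes a given direction, and Lemma \ref{DNB} only applies once you already know the geodesic meets the radial one in two points. The appeal to ``no branching in the open sector'' in the $\theta<\pi$ case is moreover circular: if the initial segment of $[x,x_{0}]$ misses the apex it lies in a flat sector $Z(u,w'')$ determined by \emph{its own} second endpoint, and nothing forces $w''$ onto the geodesic $[u,w]$; comparing the two directions through the suspension structure of $S_{x}$ just reproduces the same angle-zero-versus-coincidence problem one level down in $S_{u}$.

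This is precisely the work the paper's case analysis does: it upgrades the angle condition to the statement that $\overline{\gamma^{x}}\ast\beta\ast\gamma^{y}$ is a single geodesic (concatenation at angle $\pi$), forces $o$ onto it by the perpendicular variation in the flat sector, and then locates $o$ using Lemma \ref{DNB}. Your $\theta=\pi$ case can be repaired with one extra argument (a geodesic from $t_{x}u$ either passes through the apex, in which case its pre-apex part is the unique geodesic $[x,o]$, or lies in a convex flat sector containing $[x,o]$ where Alexandrov angles are Euclidean, so a nonradial chord makes positive angle with the apex direction); but the $\theta<\pi$ case does not close the same way, and you would essentially need to fall back on the paper's perpendicular-variation argument to exclude it.
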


\begin{proof}
Let $\delta:=d(x,y)$, $\beta=\gamma_{xy}$ and $\gamma^{x},\gamma^{y}$ be
geodesics starting at $x,y$, respectively, that extend $\gamma_{x},\gamma_{y}$
to geodesic rays. Suppose first that $\angle(\gamma^{x},\beta)<\pi$. Since
$S_{x}$ is $\pi$-geodesic, there is a geodesic $\xi$ starting at $x$ such that
$\angle(\gamma_{x},\xi),\angle(\xi,\beta)<\frac{\pi}{2}$. By the triangle
inequality in $S_{x}$, $\angle(\gamma_{x},\xi)>\frac{\pi}{2}$ and so by the
CAT(0) condition all points on $\xi$ lie strictly outside $\Sigma_{x_{0}}(r)$.
Since $\angle\left(  \xi,\beta\right)  <\frac{\pi}{2}$, by the
\textquotedblleft single-sided limit\textquotedblright\ method to measure
angles (c.f. Proposition 3.5 in \cite{BH}), any point $x^{\prime}$
sufficiently close to $x$ on $\xi$ satisfies $d(x^{\prime},y)<d(x,y)$. In the
plane, consider the comparison triangle with corners $X_{0},X^{\prime},Y$
corresponding to the one determined by $x_{0},x^{\prime},y$. Since
\[
d(X_{0},X^{\prime})=d(x_{0},x^{\prime})>r=d(x_{0},y)=d(X_{0},Y)\text{,}%
\]
by elementary geometry, if $Z$ is the point on the segment $X_{0}X^{\prime}$
with $d(X_{0},Z)=r$, $d(Z,Y)<d(X^{\prime},Y)$. By the CAT(0) condition, if $z$
is the projection of $x^{\prime}$ onto $\Sigma_{x_{0}}(r)$ then $d(z,y)\leq
d(Z,Y)<d(X^{\prime},Y)=d(x^{\prime},y)$. That is, $z$ is arbitrarily close to
$x$ but $d(z,y)<d(x,y)$. By definition, $(x,y)$ is not a sink in
$\Sigma_{x_{0}}(r)$, a contradiction.

Therefore we may assume that
\begin{equation}
\angle(\gamma^{x},\beta)=\angle(\gamma^{y},\overline{\beta})=\pi\label{pi}%
\end{equation}
and by Equation \ref{pi} $\gamma:=\overline{\gamma^{x}}\ast\beta\ast\gamma
^{y}$ is a geodesic. Assume first that $o$ does not lie on $\gamma$ and
consider the Euclidean sector $E$ determined by $\gamma$ in the local cone. At
$x$ and $y$ let $\beta_{x}$ and $\beta_{y}$ be geodesics corresponding to
lines in $E$ that are perpendicular the line corresponding to $\gamma$ with
the same orientation. That is, moving an arbitrarily small but equal amount
along $\beta_{x}$ and $\beta_{y}$ to points $x^{\prime},y^{\prime}$ we have
$d(x^{\prime},y^{\prime})=d(x,y)$. Since $\angle\left(  \gamma^{x}%
,\overline{\gamma_{x}}\right)  =\pi$ and $\angle\left(  \gamma^{x},\beta
_{x}\right)  =\frac{\pi}{2}$, by the triangle inequality $\angle\left(
\beta_{x},\overline{\gamma_{x}}\right)  \geq\frac{\pi}{2}$, and similarly,
$\angle\left(  \beta_{y},\overline{\gamma_{y}}\right)  \geq\frac{\pi}{2}$. By
the CAT(0) condition, $d(x_{0},x^{\prime}),d(x_{0},y^{\prime})>r$. Projecting
$x^{\prime},y^{\prime}$ onto $\Sigma_{x_{0}}(r)$ strictly reduces
$d(x^{\prime},y^{\prime})=d(x,y)$, showing $(x,y)$ is not a sink in
$\Sigma_{x_{0}}(r)$, a contradiction.

Therefore $o$ must lie on $\gamma$, hence on $\gamma^{x}$, $\beta$, or
$\gamma^{y}$. Suppose $o$ lies on $\gamma^{x}$ and $o\neq x$. Then
$\gamma_{ox}\ast\beta$ is radial and $\gamma_{ox}\ast\overline{\gamma_{x}}$ is
radial as long as it lies in the local cone at $o$. Since $y$ is also in the
local cone, $\gamma_{ox}\ast\overline{\gamma_{x}}$ lies in the local cone for
at least length $d(x,y)$ beyond $x$. By the triangle inequality, the midpoint
$m$ of $\beta$ satisfies $d(x_{0},m)\geq r-\frac{d(x,y)}{2}$. Since
$\overline{\gamma_{x}}$ is a geodesic to $x_{0}$ and $d(x_{0},y)=r$,
$\overline{\gamma_{x}}$ and $\beta$ cannot coincide for the entire length of
$\beta$. By definition this means that the radial geodesic $\gamma_{ox}%
\ast\beta$ bifurcates inside the local cone, a contradiction to Lemma
\ref{DNB}.

Supose that $o=x$. Then $\beta\ast\gamma^{y}$ is a radial geodesic that
coincides with $\gamma_{y}\ast\gamma^{y}$ on $\gamma^{y}$. But the former does
not pass through $x$ and hence the radial geodesic $\gamma_{y}\ast\gamma^{y}$
bifurcates somewhere along $\beta$, which is in the local cone, a
contradiction. Similarly $o$ does not lie on $\gamma^{y}$.

Next suppose that $o$ lies on $\beta$, so $\gamma_{ox}$ is radial. By a
similar argument to what we have used above, the fact that radial geodesics do
not bifurcate implies that $\gamma_{x}$ must coincide with $\gamma_{ox}$ from
$o$ to $x$. By symmetry, $\gamma_{y}$ must coincide with $\gamma_{oy}$ from
$o$ to $y$ and $o=m$. Since $\beta$ is a geodesic, $d(x,o)=d(y,o)=\frac
{d(x,y)}{2}$. Since $x$ and $y$ lie in the local cone and $o$ is the apex,
$\rho\geq\frac{d(x,y)}{2}$.
\end{proof}

The next lemma is probably known but we do not have a reference. It is useful
because it allows us to replace a given cover by metric balls with a cover by
metric balls of uniformly large size.

\begin{lemma}
\label{cover}Let $\mathcal{C=\{}B(x_{\alpha},r_{\alpha})\}_{\alpha\in\Lambda}$
be a covering of a geodesic space $X$ by metric balls. If $\mathcal{U}$ has a
Lebesgue number $\lambda>0$ then there is a subcovering $\mathcal{C}^{\prime
}\mathcal{=\{}B(x_{\alpha},r_{\alpha})\}_{\alpha\in\Lambda^{\prime}}$ of $X$
with the following properties:

\begin{enumerate}
\item For all $\alpha\in\Lambda^{\prime}$, $r_{\alpha}\geq\frac{\lambda}{4}$
also covers $X$.

\item $\mathcal{C}^{\prime}$ has Lebesgue number $\frac{\lambda}{2}$.
\end{enumerate}
\end{lemma}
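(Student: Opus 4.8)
The plan is to keep the given balls unchanged and simply throw away those whose radius is too small. Concretely I would set $\Lambda' = \{\alpha \in \Lambda : r_\alpha \geq \frac{\lambda}{4}\}$ and take $\mathcal{C}' = \{B(x_\alpha, r_\alpha)\}_{\alpha \in \Lambda'}$, which is visibly a subcollection of $\mathcal{C}$ consisting only of balls of radius at least $\frac{\lambda}{4}$. Recalling that a Lebesgue number $\lambda$ for $\mathcal{C}$ means that for every $x \in X$ the ball $B(x,\lambda)$ lies inside a single member of $\mathcal{C}$, I claim that both assertions reduce to one statement: for every $x \in X$, \emph{any} member $B(x_\alpha, r_\alpha) \supseteq B(x,\lambda)$ furnished by the Lebesgue number already satisfies $r_\alpha \geq \frac{\lambda}{4}$, i.e. $\alpha \in \Lambda'$. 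Granting this, covering is immediate because $x \in B(x,\lambda) \subseteq B(x_\alpha, r_\alpha)$ with $\alpha \in \Lambda'$; and $\mathcal{C}'$ has Lebesgue number $\frac{\lambda}{2}$ because $B(x, \frac{\lambda}{2}) \subseteq B(x,\lambda) \subseteq B(x_\alpha, r_\alpha)$ for that same $\alpha \in \Lambda'$. So the whole content sits in the radius estimate.

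To prove the estimate I would use that $X$ is geodesic to manufacture a second point at a controlled distance from $x$. Since $x \in B(x,\lambda) \subseteq B(x_\alpha, r_\alpha)$ we have $d(x, x_\alpha) < r_\alpha$. If I can locate a point $y$ with $\frac{\lambda}{2} \leq d(x,y) < \lambda$, then $y \in B(x,\lambda) \subseteq B(x_\alpha, r_\alpha)$ as well, and the triangle inequality gives $\frac{\lambda}{2} \leq d(x,y) \leq d(x,x_\alpha) + d(x_\alpha, y) < 2 r_\alpha$, whence $r_\alpha > \frac{\lambda}{4}$; this is exactly where the factor $\frac14$ enters, since both $x$ and $y$ live in a ball of radius $r_\alpha$. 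To produce such a $y$ I would take any $z$ with $d(x,z) \geq \frac{\lambda}{2}$: if $d(x,z) < \lambda$ I set $y = z$, and if $d(x,z) \geq \lambda$ I run along a geodesic from $x$ to $z$ and invoke the intermediate value property of its length parametrization to find a point at distance exactly $\frac{3\lambda}{4} \in [\frac{\lambda}{2}, \lambda)$. This is precisely the step that needs geodesicity: in a non-geodesic space the set of realized distances from $x$ need not be an interval, and the point $y$ may simply fail to exist.

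The main obstacle, and the only genuinely delicate point, is therefore guaranteeing a point $z$ with $d(x,z) \geq \frac{\lambda}{2}$. This holds automatically whenever $X$ is unbounded, which is the case in every application in this paper, since the relevant spaces carry a boundary at infinity. The lone exception is the degenerate regime $\operatorname{diam}(X) < \frac{\lambda}{2}$: there the entire space lies in $B(x,\frac{\lambda}{2})$, a single member of $\mathcal{C}$ may engulf all of $X$ with radius below $\frac{\lambda}{4}$, and the stated conclusion can genuinely fail. Accordingly I would either adopt the (harmless) standing hypothesis that $X$ be unbounded, or note explicitly that the lemma is only ever applied to such $X$. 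Once the radius estimate is secured in this way, the covering and Lebesgue-number claims are purely formal, so I do not anticipate any further difficulty.
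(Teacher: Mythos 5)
Your argument is correct and is essentially the paper's own: use geodesicity to manufacture a point $y$ with $d(x,y)$ close to $\frac{\lambda}{2}$, so that any member of $\mathcal{C}$ containing both $x$ and $y$ is forced by the triangle inequality to have radius at least $\frac{\lambda}{4}$, after which both conclusions are formal. The only differences are in bookkeeping: the paper deals with the bounded case not by assuming unboundedness but by the equivalent device of first shrinking $\lambda$ below $\mathrm{diam}(X)$ and producing the far point from a diameter-realizing pair $a,b$ (your observation that the statement fails verbatim when $\mathrm{diam}(X)<\frac{\lambda}{2}$ is correct, and moot in the paper's application), and the paper's proof reads ``Lebesgue number'' in the diameter sense (two points at distance $<\lambda$ lie in a common member) rather than your ball sense, which only shifts where the factor of $2$ enters.
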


\begin{proof}
Without loss of generality, if $X$ has finite diameter then we can assume that
$\lambda$ is less than the diameter $D$ of $X$; if $X$ is unbounded let
$D>\lambda$ be arbitrary. In either case, since $X$ is a geodesic space, there
exist $a,b$ such that $d(a,b)=D$. Then for any $x\in X$ $d(x,a)\geq\frac{D}%
{2}$ or $d(x,b)\geq\frac{D}{2}$. Moving along the geodesic $\gamma_{xa}$ or
$\gamma_{xb}$ there is some point $y\in X$ such that $d(x,y)=\frac{\lambda}%
{2}<\lambda$. Therefore there is some $B(x_{\alpha},r_{\alpha})$ containing
both $x$ and $y$. But since $d(x,y)=\frac{\lambda}{2}$, by the triangle
inequality, $\frac{\lambda}{2}\leq2r_{\alpha}$, showing that the balls of
radius $\geq\frac{\lambda}{4}$ in $\mathcal{C}$ cover $X$. Now suppose that
$d(x,y)<\frac{\lambda}{2}$. If $z$ is the midpoint of a geodesic from $x$ to
$y$, by what we have just shown there is some $B(x,r_{\alpha})$ containing $z$
with $r_{\alpha}\geq\frac{\lambda}{4}$, and by this ball must contain both $x$
and $y$.
\end{proof}

\begin{proof}
[Proof of Theorem \ref{fin}]Let $\rho$ be a Lebesgue number for a uniform cone
cover of $X$. By Lemma \ref{cover} we can assume that the cone radii for the
local cones are all at least $\frac{\rho}{2}$. For every $t>0$ let
\[
\iota(t):=2\left(  t-\sqrt{t^{2}-\frac{\rho^{2}}{4}}\right)  >0
\]
and assume that $t$ is large enough that $\iota(t)<\rho$. Since $\iota$ is
positive and continuous we need only verify Theorem \ref{CAT}.2 for any $t$.
Suppose that $d(x,y)<\iota(t)$ and $(x,y)$ is a sink in $\Sigma_{x_{0}}(t)$.
Since $d(x,y)<\rho$, $x,y$ lie in a local cone with vertex $o$.

Since $(x,y)$ is a sink, by Proposition \ref{largeprop} $\gamma_{x},\gamma
_{y}$ coincide up to $o$ and $d(o,x)=d(o,y)=\frac{d(x,y)}{2}<t-\sqrt
{t^{2}-\frac{\rho^{2}}{4}}$. We also have
\begin{equation}
d(x_{0},o)=t-\frac{\iota(t)}{2}=\sqrt{t^{2}-\frac{\rho^{2}}{4}\text{.}}
\label{feq}%
\end{equation}
Moreover, in $S_{o}$, $\alpha(\gamma_{ox}^{\prime},\overline{\gamma_{o}%
}^{\prime})=\alpha(\gamma_{oy}^{\prime},\overline{\gamma_{o}}^{\prime}%
)=\pi\geq\frac{\pi}{2}$. By assumption there is a curve $c$ in $S_{o}$ from
$\gamma_{ox}^{\prime}$ to $\gamma_{oy}^{\prime}$ such that for all $q$,
$\alpha(\overline{\gamma_{o}}^{\prime},c(q))\geq\frac{\pi}{2}$. Since radial
geodesics do not bifurcate, and the cone radius is $\frac{\rho}{2}$, there is
a point $x^{\prime}$ on the unique (inside the cone) extension of $\gamma
_{ox}$ of distance $\frac{\rho}{2}$ from $o$, and an analogous point
$y^{\prime}$. Now the curve $\frac{\rho}{2}\cdot c$ is defined from
$x^{\prime}$ to $y^{\prime}$. By the CAT(0) inequality, for every $s$, by
Equation (\ref{feq})
\[
d(\frac{\rho}{2}\cdot c(s)),x_{0})^{2}\geq d(x_{0},o)^{2}+\frac{\rho^{2}}%
{4}=t^{2}\text{.}%
\]
Therefore the projection $\widetilde{c}$ of $\frac{\rho}{2}\cdot c$ onto
$\Sigma_{x_{0}}(t)$ is defined and joins $x$ and $y$. Moreover, since
$\widetilde{c}$ remains inside $B(o,\frac{\rho}{2})$ by the triangle
inequality it remains inside $B(x,\rho)\cap B(y,\rho)$ and we may take
$K=\rho$ to finish the proof of the first part of the theorem.

Suppose that $S_{o}$ with the angle metric is a geodesic space. Suppose that
$a,b,c\in S_{o}$ with $a,b\notin B(c,\frac{\pi}{2})$. Since $X$ is
geodesically complete and locally conical, Lemma \ref{gclem} implies that
$S_{o}$ is also geodesically complete. By the same lemma, the geodesics
$\gamma_{ca}$ and $\gamma_{cb}$ extend to geodesics of length $\pi$ to points
$a^{\prime},b^{\prime}$, respectively. Since $d(a^{\prime},b^{\prime})\leq\pi
$, any geodesic from $a^{\prime}$ to $b^{\prime}$ must remain outside
$B(c,\frac{\pi}{2})$ by the triangle inequality. The extensions of
$\gamma_{ca}$ and $\gamma_{cb}$ also remain outside $B(c,\frac{\pi}{2})$ and
therefore there is a path from $a$ to $a^{\prime}$ then $a^{\prime}$ to
$b^{\prime}$ then $b^{\prime}$ to $b$ that stays outside $B(c,\frac{\pi}{2})$.

Now suppose $S_{o}$ has no cut points and is locally conical with all cone
radii at least $\frac{\pi}{2}$. Suppose $a,b,c$ are as in the previous
paragraph. Since $c$ is not a cut point, the complement of $c$ is connected,
hence path connected. That is, there is a curve from $a$ to $b$ that misses
$c$. Now any segment of $c$ that enters $B(c,\frac{\pi}{2})$ can be homotoped
onto $\Sigma_{c}(\frac{\pi}{2})$ using the radial retraction (Remark
\ref{radial}), resulting in a curve from $a$ to $b$ that stays outside
$B(c,\frac{\pi}{2})$.
\end{proof}

\begin{acknowledgement}
In connection with this paper I had useful conversations with Ross Geoghegan,
Mike Mihalik, and Kim Ruane.
\end{acknowledgement}

\end{document}